\NewDocumentCommand{\evalat}{sO{\big}mm}{%
	\IfBooleanTF{#1}
	{\mleft. #3 \mright|_{#4}}
	{#3#2|_{#4}}%
}
\numberwithin{equation}{section}
\newcommand{\mref}[3][cyan]{\hypersetup{linkcolor=#1}\cref{#2}{#3}\hypersetup{linkcolor=blue}}
\newtheorem{theorem}{Theorem}
\newtheorem{proposition}{Proposition}
\newtheorem{remark}{Remark}
\newtheorem{lemma}{Lemma}
\begin{document}

\title{Farming awareness based optimum interventions for crop pest control}

\author{Teklebirhan Abraha\affil{1}, 
Fahad Al Basir\affil{2}, 
Legesse Lemecha Obsu\affil{1} 
and Delfim F. M. Torres\affil{3,}\corrauth} 

\shortauthors{T. Abraha, F. Al Basir, L. L. Obsu, D. F. M. Torres}

\address{\addr{\affilnum{1}}{Department of Mathematics,
Adama Science and Technology University, Adama, Ethiopia}
\addr{\affilnum{2}}{Department of Mathematics,
Asansol Girls' College, West Bengal 713304, India}
\addr{\affilnum{3}}{Center for Research and Development
in Mathematics and Applications (CIDMA),\\
Department of Mathematics,
University of Aveiro, 3810-193 Aveiro, Portugal}}

\corraddr{Email: delfim@ua.pt; Tel: +351-234-370-668; Fax: +351-234-370-066.}
	

\begin{abstract}
We develop a mathematical model, based on
a system of ordinary differential equations,
to the upshot of farming alertness in crop
pest administration, bearing in mind plant biomass,
pest, and level of control.
Main qualitative analysis of the proposed mathematical model,
akin to both pest-free and coexistence equilibrium points
and stability analysis, is investigated. We show that all solutions
of the model are positive and bounded with initial conditions
in a certain significant set. The local stability of pest-free
and coexistence equilibria is shown using the Routh--Hurwitz criterion.
Moreover, we prove that when a threshold value is less than one, then
the pest-free equilibrium is locally asymptotically stable.
To get optimum interventions for crop pests, that is,
to decrease the number of pests in the crop field,
we apply optimal control theory and find the corresponding
optimal controls. We establish existence of optimal controls
and characterize them using Pontryagin's minimum principle.
Finally, we make use of numerical simulations to illustrate the
theoretical analysis of the proposed model, with and without control
measures.
\end{abstract}

\keywords{mathematical modeling of ecological systems;
Holling type-II functional response;
stability;
Hopf-bifurcation;
optimal control;
Pontryagin's minimum principle;
numerical simulations.}

\maketitle


\section{Introduction}

Pest control is a worldwide problem in agricultural and forest ecosystem management,
where mathematical modeling has an important impact \cite{MyID:478,MR4207019,MR4240711}.
Broad-spectrum chemical pesticides have been used in abundance in the containment
and annihilation of pests of medical, veterinary, agricultural, and environmental importance.
Although chemical plant defense plays a significant function in modern agricultural practices,
it is at rest viewed as a profit-induced poisoning of the surroundings. The non-degradable
chemical residues, which construct to damaging stages, are the core cause of health
and environmental hazards and most of the present hostility toward them. The imperative
role of microbial pesticides in integrated pest management,
is well-known in agriculture, forestry, and public fitness
\cite{MR4114006,MR4152828,MR4129647}. As included pest administration,
biological pesticides give noticeable pest control
reliability in the case of plants \cite{IJB}. The employ of viruses alongside insect pests,
as pest control agents, is seen worldwide, in particular in
North America and European countries \cite{Naranjo}.

Agricultural practitioners should be awake of the threats accessible by pests and diseases
and of fitting steps that may be taken to prevent their incidence and to undertake
them should they become problematic. Responsiveness campaigns, in meticulous through
radio or TV, are required so that people will gain trustworthiness on a biological
control approach. Farmers, in their awareness, can stay the crop beneath surveillance
and, so, if properly trained, they will send out bio-pesticides or fit in
productive to build the pest vulnerable to their bio-agents \cite{Khan}.

Precise and pertinent information about plants and their pests is necessary
for people occupied in crop growing. The role of electronic media is crucial
for observing the farming society rationalized and by providing pertinent
agricultural information \cite{IJB}. Accessible pesticide information campaigns
help farmers to be aware of the grave threats that insect killers have on human
health and the environment, and to reduce negative effects \cite{risks}.
Accepting consciousness programs, planned to teach farmers', fallout
in improved comprehensive growth for the cultivars and also for the farmers.
Farmers be trained the use and hazards of pesticides mostly by oral announcement.
Self aware farmers occupy considerably improved agronomic perform,
safeguarding the health and reducing environmental hazards.
Therefore, alertness is vital in crop pest administration \cite{Yang2014}.

Damage of plants by pest plague is a solemn international fret
in not only farming fields, but also in afforests ecosystems. This problem,
linked with pests, has been realized since the crop growing of harvest started.
About 42\% of world's food afford is exhausted because of pests. In former days,
chemical pesticide was used to control pests, but intense use of chemical pesticides
in farming produces a lot of side effects. Pest renaissance and secondary pest
outbursts are two of the key problems. Mathematical models have great reimbursements
for relating the dynamics of crop pest management. Quite a few models have been formulated
and analyzed to explain the dynamics of plant disease diffusion and charging their
controls \cite{JTB}. For instance, authors in \cite{LeBellec}
consider how allowing surroundings for connections among farmers, researchers,
and other factors, can donate to reduce existing problems related with the crop.
Al Basir et al., describe the participation of farming communities
in Jatropha projects for biodiesel construction and guard of plants from mosaic disease,
using a mathematical model to predict the growth of renewable power resources \cite{mma}.

In \cite{EC2}, Chowdhury et al. derive a model for the control of pests
by employing biological pesticides. Furthermore, they also include
optimal control theory to reduce the cost in pest administration due to bio-pesticides.
For more on optimal control theory to eradicate the number of parasites in agro ecosystems,
see the pioneering work \cite{MR0513616,MR2899157,MR3660471}. In \cite{ridm},
Al Basir and Ray investigate the dynamics of vector-borne plant disease dynamics
influenced by farming awareness. In \cite{SPA}, Pathak and Maiti develop a mathematical model
on pest control with a virus as a control agent. They consider
the mutual relations going on an ecosystem where a virus influences
a pest population nosh on a plant, the final creature unaltered by the virus.
They prepare a time-delayed model and prove a blend of detailed analysis of the model.
In concrete, they show the positivity and boundedness of solutions,
the dynamical behaviors of the model in the lack and occurrence of time-delay.
In \cite{JTB}, Al Basir et al. prospect a mathematical model for recitation
of the level of wakefulness in pest control, together with biological pesticides,
which are exactly the most ordinary control mechanisms used in integrated pest
administration based control outlines. They assume the speed of flattering aware
is comparative to the number of vulnerable pests in the meadow. They prepare
the model as more realistic, bearing in mind the time delay due to assess of pest
in the field. Non-negativity of the solutions is analyzed, by finding the invariant region.
Stability analyses of the systems has been studied using qualitative methods.
The obtained analytical results are then verified through numerical simulations.
In \cite{IJB}, Al Basir develop a new model by employing delayed differential equations
to learn the vibrant of crop pests interacting system prejudiced via a responsiveness movement.
The author tacit the rate of becoming aware is relative to the number of susceptible pests
within the grassland \cite{IJB}.

Here, a mathematical model is formulated to defend crops
through awareness movements, modeled via saturated terms,
and an optimal control problem for bio-pesticides is posed and solved.
More precisely, a compartmental model is developed with ordinary differential equations
to study the force of farming awareness-based optimum interventions for crop pest control
(\mref{sec2}). It is assumed that with the pressure of bio-pesticide,
a susceptible pests population becomes infected.  
Infected pests can harass the plant but the rate 
is very smaller than susceptible pests. Hence,
we suppose that infected pests cannot guzzle the plant biomass. Non-negativity and
boundedness of the solutions are proved by computing the feasible region (\mref{sec3.1}).
Stability analysis of the dynamics is carried out (\mref{sec3.2}).
Optimal control techniques are then investigated (\mref{sec4}).
In concrete, optimal control theory is practiced in the dynamics
to reduce the price of pest administration. Our analytical results are
illustrated by computing numerical simulations
(\mref{sec5}). Lastly, we end up the paper
with discussion and conclusions (\mref{sec6}).


\section{Model formulation and description}
\label{sec2}

We consider four populations into our mathematical model:
the plants biomass $X(t)$, the susceptible pests $S(t)$,
the infected pests $I(t)$, and the awareness level $A(t)$.
The next assumptions are done to define the dynamical system:
\begin{itemize}
	
\item[--] The sway of the biological pesticides, susceptible pest
inhabitants turn into infected. The infected pests can assault
the crop but with a very lesser to the susceptible pest and, therefore,
we assume that infected pests cannot devour the crop biomass.
In this technique, the crop is kept from pest molest.
	
\item[--] Owing towards the limited size of harvest field, we take
for granted logistical increase for the thickness of crop biomass,
by means of net enlargement rate $r$ and carrying ability $K$.
	
\item[--] Crops obtain influenced via pests. In this way,
rooting significant plant biomass decreases.
Crop pest contact may be considered as a Holing type II functional response.
	
\item[--] Susceptible attacks the plant, resulting extensive plant lessening.
Once one infects the susceptible pest using pesticides, the attacked pests
will be controlled. Here it is assumed that the aware people will accept
bio-pesticides for the plant pest control, as it has fewer disadvantages
and is also surroundings welcoming. Bio-pesticides are used to contaminate
the susceptible pest. Infected pest are assumed to have an extra death rate
due to infection. We also additionally suppose that infected pests
cannot devour the plants.
	
\item[--] Let $\alpha$ be the utilization rate of pest.
We assume that there is a pest infection rate, $\lambda$,
because of human awareness connections and movement such as use
of bio-pesticides, modeled through the common crowd exploit term
$\lambda A S$. We denote by $d$ the natural death rate of pest and
by $\delta$ the extra death rate of infected pest owing
to awareness population motion.
	
\item[--] We suppose that the level of conscious of people will
enlarge on a rate almost similar to the figure of susceptible pest
per plant noticed in a farming field. There might be vanishing
of attention in this operation. We denote by $\eta$ the rate
of fading of interest of aware people.
\end{itemize}

Susceptible pest consume the crop, thereby 
causing considerable crop reduction. If we infect the 
susceptible pest by pesticides, then the pest attack 
can be reduced as infected pest is lesser harmful.

One example of bio-pesticides is a 
Nuclear Polyhidrosis Virus (NPV) from the Baculovirus family, 
which is often host-specific and usually fatal for pests. 
Once susceptible pests feed on plants that have been treated 
by this or similar viruses, they will ingest virus particles, 
which will make them infected. In this article, we consider 
a scenario, where plants are treated (using spraying or soaking) 
with such bio-pesticides that only affect insect pests causing 
in them a persistent and fatal infection.

Susceptible pests, if once infected, 
are assumed lesser harmful for crop biomass. The aware 
people infect the pest population and the infected population 
does not recover or become immune. The infection rate is governed 
by the so-called mass-action incidence.

The crop-pest interaction is studied with 
the Michaelis--Menten type (or Holling type II) functional 
response \cite{SPA}. In the type II functional response, 
the rate of prey consumption by a predator rises as prey 
density increases, but eventually levels off at a plateau 
(or asymptote) at which the rate of consumption remains constant 
regardless of increases in prey density. Here we realize 
huge number of pest for the crop filed (saturation occur). 
Type-II functional response is characterized by a decelerating 
intake rate, which follows from the assumption that the consumer 
is limited by its capacity to process food. Type II functional 
response is often modeled by a rectangular hyperbola, for instance 
as by Holling's disc equation, which assumes that processing 
of food and searching for food are mutually exclusive behaviors.

The level of awareness, $A(t)$, can be raised by 
seeing the crops or simply talking about its health and benefit, 
through direct interactions or by visual inspecting the crops. 
This occurs at rate $\sigma$ \cite{IJB}.

Using the above mentioned assumptions, we get the following
mathematical model:
\begin{equation}
\label{state1}
\begin{cases}
\displaystyle \frac{dX}{dt}
= r X\left[1 - \frac{X}{K}\right]
- \frac{\alpha XS}{c+X}- \frac{\phi \alpha XI}{c+X},\\[0.3cm]
\displaystyle \frac{dS}{dt}
=  \frac{m_1 \alpha XS}{c+X} -  \frac{\lambda AS}{a+A} -dS,\\[0.3cm]
\displaystyle \frac{dI}{dt}
= \frac{m_2 \phi\alpha XI}{c+X} +  \frac{\lambda AS}{a+A}- (d+\delta)I,\\[0.3cm]
\displaystyle \frac{dA}{dt} =\gamma+ \sigma(S+I) - \eta A,
\end{cases}
\end{equation}
with given initial situations $X(0)>0$, $S(0)>0$, $I(0)>0$, $A(0)>0$.

Here $\alpha$ is the consumption rate of pests on crops,
the infected pest may also molest the plant but at a lesser rate, $\phi\alpha$,
with $\phi<1$, $a$ and $c$ are the half saturation constants, $m_1$, $m_2$
are the ``conversion efficiency'' of the susceptible and the infected pest,
respectively, i.e., how proficiently can pests utilize plant supply.
As pesticide affecting pests have lesser efficiency, we consider $m_1>m_2$.
By $\gamma$ we denote the increase of level from global advertisement by radio,
TV, etc. It is clear to assume that all the parameters are non-negative.


\section{Model analysis}
\label{sec3}

Now, some essential properties of the solutions of system \eqref{state1} are given.
In concrete, we show non-negativeness, invariance, and boundedness of solutions.
From a biological point of view, these mathematical properties of our system \eqref{state1}
are crucial to the well-posedness of the model.


\subsection{Positivity of solutions and the invariant region}
\label{sec3.1}

Feasibility and positivity of the solutions are the basic properties
of system \eqref{state1} to be shown in this section.
Our result explains the region in which the solution of the equations
is biologically relevant.

All state variables should remain non-negative, since they represent
plant and pest population. The feasible region is, therefore, given by		
\[
\Omega=\left\{(X,S,I,A)\in\mathbb{R}_+^4 :
X \geq 0, S \geq 0,I \geq 0,A \geq 0\right\}.
\]
To prove the positivity of the solutions 
of system \eqref{state1}, we use 
the following  lemma.

\begin{lemma}
\label{lemma1}
Any solution of a differential equation 
\begin{equation}
\label{eq:de:le1}
\frac{dX}{dt} = X\psi(X, Y )
\end{equation}
is always positive.
\end{lemma}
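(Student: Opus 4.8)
The plan is to solve the scalar equation \eqref{eq:de:le1} explicitly by separation of variables, treating $Y$ (and hence the function $t \mapsto \psi(X(t), Y(t))$) as a known, sufficiently regular function of $t$ along a given solution. Writing $g(t) := \psi\bigl(X(t), Y(t)\bigr)$, equation \eqref{eq:de:le1} becomes the linear equation $\frac{dX}{dt} = g(t) X$, whose solution is
\begin{equation}
\label{eq:exp:le1}
X(t) = X(0)\, \exp\!\left( \int_0^t g(s)\, ds \right).
\end{equation}
Since the exponential factor is strictly positive for every $t$ in the interval of existence, the sign of $X(t)$ coincides with the sign of $X(0)$. In particular, if $X(0) > 0$ then $X(t) > 0$ for all $t$, which is the assertion of the lemma.

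First I would make precise the standing hypotheses under which this manipulation is legitimate: $\psi$ should be continuous (and locally Lipschitz in its first argument, so that the initial value problem has a unique solution), and the relevant solution should be assumed to exist on some interval containing $0$. Then I would introduce $g(t) := \psi(X(t), Y(t))$, observe that $g$ is continuous along the solution, and rewrite \eqref{eq:de:le1} as a first-order linear ODE in $X$. Multiplying by the integrating factor $\exp\!\left(-\int_0^t g(s)\,ds\right)$ and integrating from $0$ to $t$ yields \eqref{eq:exp:le1}. Finally I would conclude from the positivity of the exponential that $X$ never changes sign, so $X(0) > 0$ forces $X(t) > 0$ throughout.

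The main subtlety — rather than a deep obstacle — is that the statement as written is mildly informal: $\psi$ depends on a second variable $Y$ which is itself implicitly a function of $t$ (in the application, $Y$ stands for the other state variables of system \eqref{state1}), so strictly speaking the lemma should be read as a statement about a component of a coupled system rather than about a genuinely scalar autonomous equation. The argument via \eqref{eq:exp:le1} works regardless, because it only uses that the coefficient $g(t)$ is a well-defined integrable function of $t$ along the solution; one does not need to know $Y$ explicitly. I would therefore present the proof in the coupled-system form, noting that the same reasoning applies verbatim to each of the equations for $X$, $S$, and $I$ in \eqref{state1}, each of which has the structure $\dot{u} = u\,\psi_u(\cdot)$, thereby yielding non-negativity of all these components and hence forward invariance of $\Omega$.
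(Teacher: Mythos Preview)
Your argument is correct and is essentially the paper's own proof: both separate variables (equivalently, use an integrating factor) to obtain the exponential representation $X(t)=X(0)\exp\!\left(\int_0^t \psi\bigl(X(s),Y(s)\bigr)\,ds\right)$ and conclude positivity from the strict positivity of the exponential; your version is actually more careful than the paper's about regularity assumptions and about treating $Y$ as an implicit function of $t$.

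One correction to your closing remark, however: the $I$-equation in \eqref{state1} does \emph{not} have the structure $\dot I = I\,\psi_I(\cdot)$, since the source term $\dfrac{\lambda A S}{a+A}$ is not a multiple of $I$. The paper accordingly applies Lemma~\ref{lemma1} only to the $X$ and $S$ equations and handles $I$ and $A$ by a separate variation-of-constants argument; you should do the same rather than assert that the lemma covers $I$ directly.
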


\begin{proof}
A differential equation of form \eqref{eq:de:le1}
can be written as 
$\displaystyle \frac{dX}{X} = X\psi(X, Y )dt$. 
Integrating, we obtain that
$\ln X=C_0+\int\psi(X,Y)dt$, i.e., one has
$X=C_1e^{\int\psi(X,Y)dt}>0$ with $C_1>0$. 
\end{proof}

\begin{proposition}
The solutions of system \eqref{state1}, 
together with their initial conditions,
remain non-negative for all $t>0$.
\end{proposition}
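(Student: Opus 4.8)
The plan is to handle the four equations according to their algebraic structure. The $X$- and $S$-equations are already of the form \eqref{eq:de:le1}: factoring out the unknown gives
\[
\frac{dX}{dt} = X\,\psi_1(X,S,I), \qquad \psi_1 = r\left(1-\frac{X}{K}\right) - \frac{\alpha S}{c+X} - \frac{\phi\alpha I}{c+X},
\]
and likewise $\frac{dS}{dt} = S\,\psi_2(X,A)$ with $\psi_2 = \frac{m_1\alpha X}{c+X} - \frac{\lambda A}{a+A} - d$. Hence \cref{lemma1} applies verbatim and yields $X(t)>0$ and $S(t)>0$ on the maximal interval of existence, since $X(0)>0$ and $S(0)>0$.

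The $I$- and $A$-equations are not of this pure multiplicative form because of the source terms $\frac{\lambda AS}{a+A}$ and $\gamma+\sigma(S+I)$, so here I would use integrating factors instead. Writing the $I$-equation as
\[
\frac{dI}{dt} + \left[(d+\delta) - \frac{m_2\phi\alpha X}{c+X}\right] I = \frac{\lambda AS}{a+A}
\]
and multiplying by $\mu(t) = \exp\!\left(\int_0^t\!\left[(d+\delta) - \frac{m_2\phi\alpha X(s)}{c+X(s)}\right]ds\right) > 0$ gives $\frac{d}{dt}\big(\mu(t)I(t)\big) = \mu(t)\,\frac{\lambda A(t)S(t)}{a+A(t)}$; similarly, multiplying the $A$-equation by $e^{\eta t}$ gives $\frac{d}{dt}\big(e^{\eta t}A(t)\big) = e^{\eta t}\big(\gamma+\sigma(S(t)+I(t))\big)$. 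Once the right-hand sides are known to be non-negative, integrating from $0$ to $t$ forces $I(t)\geq I(0)/\mu(t) > 0$ and $A(t)\geq A(0)e^{-\eta t}>0$ (note $\mu$ is finite on any bounded interval, since its integrand is bounded).

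The one point requiring care — and the main obstacle — is that the last two right-hand sides are only manifestly non-negative once we already know $S,A\geq 0$ (for the $I$-equation) and $S,I\geq 0$ (for the $A$-equation), so the arguments for $I$ and $A$ are mutually entangled. I would close the loop with a ``first exit'' argument: set $t_\star = \sup\{\,\tau : X,S,I,A > 0 \text{ on } [0,\tau)\,\}$, which is positive by continuity and the strict positivity of the initial data. On $[0,t_\star)$ all four components are positive, so both source terms above are non-negative there, and the integrating-factor identities then keep $X,S,I,A$ bounded away from $0$ as $t\uparrow t_\star$; this contradicts the definition of $t_\star$ unless $t_\star$ coincides with the right endpoint of the maximal interval of existence. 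Therefore every solution remains strictly positive, and in particular non-negative, for all $t>0$, so the trajectory stays in $\Omega$.
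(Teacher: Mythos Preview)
Your proof is correct and follows essentially the same approach as the paper: \cref{lemma1} for the multiplicative $X$- and $S$-equations, integrating factors for the $I$- and $A$-equations, and a contradiction argument to break the mutual dependence between the latter two. Your single ``first exit time'' $t_\star$ is a cleaner packaging of what the paper does by a nested contradiction (assuming $I$ vanishes at some $t_0$, then supposing $A$ vanishes at some earlier $t_1$, and deriving $A'(t_1)>0$), but the ingredients and logic are the same.
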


\begin{proof}
We use \mref{lemma1}~to prove the positivity 
of the first two equations. It follows from 
the first equation of system \eqref{state1} that 
\[
\frac{dX}{dt}=X\left[r\left(1-\frac{X}{K}\right)
-\frac{\alpha S+\phi\alpha I}{c+X}\right]
\Rightarrow 
\left[r\left(1-\frac{X}{K}\right)
-\frac{\alpha S+\phi\alpha I}{c+X}\right]dt.
\]
Hence,
\[
\ln X=D_0+\int_{0}^{T} \left[r\left(1-\frac{X}{K}\right)
-\frac{\alpha S+\phi\alpha I}{c+X}\right]dT
\]
so that
\[
X(T)=D_1e^{\displaystyle \int_{0}^{T}\left[r\left(1-\frac{X}{K}\right)
-\frac{\alpha S+\phi\alpha I}{c+X}\right]dT}>0
\quad (\because D_1=e^{D_0}).
\]
From the second equation of system \eqref{state1}, 
we have
\[
\frac{dS}{dt}=S\left[\frac{m_1\alpha X}{c+X}-\frac{\lambda A}{a+A}-d\right]
\Rightarrow \frac{dS}{S}=\left[\frac{m_1\alpha X}{c+X}
-\frac{\lambda A}{a++A}-d\right]dt.
\]
Therefore,
\[
\ln S=K_0+\int_{0}^{T} 
\left(\frac{m_1\alpha X}{c+X}
-\frac{\lambda A}{a+A}-d\right)dT
\]
and
\[
S(T)=K_1e^{\displaystyle \int_{0}^{T} \left(\frac{m_1\alpha X}{c+X}
-\frac{\lambda A}{a+A}-d\right)dT}>0.
\]
To show that $I$ and $A$ are non-negative, 
consider the following sub-system of \eqref{state1}:
\begin{equation}
\label{subsystem}
\begin{cases}
\displaystyle \frac{dI}{dt}=\frac{m_2\phi\alpha XI}{c+X}
+\frac{\lambda AS}{a+A}-(d+\delta)I,\\[0.3cm]
\displaystyle \frac{dA}{dt}=\gamma+\sigma(S+I)-\eta A.
\end{cases}
\end{equation}
To show the positivity of \(I(t)\),
we do the proof by contradiction.
Suppose there exists \(t_0\in(0,T)\) 
such that $I(t_0)=0$, $I'(t_0)\leq0$ 
and \(I(t)>0\) for \(t\in[0,t_0)\). Then, 
\(A_0>0\) for \(t\in[0,t_0)\). If this is not to be the case, 
then there exists \(t_1\in[0,t_0)\) such that 
$A(t_1)=0$, $A'(t_1)\leq 0$ and \(A(t)>0\) for \(t\in[0,t_0)\). 
Integrating the third equation of system \eqref{state1} gives
\begin{equation*}
\begin{aligned}
I(t)&=I(0)\,\exp\left(m_2\,\phi\,\alpha\,\int_{0}^{t}\left( \frac{X(\tau)}{c+X(\tau)}d\tau\right)-(d+\delta)t\right)\\
&+\left[\exp\left(m_2\,\phi\,\alpha\,\int_{0}^{t}\left( \frac{X(\tau)}{c+X(\tau)}d\tau\right)-(d+\delta)t\right)\right]\\
&\times\left[\int_{0}^{t}\frac{\lambda\,A(\tau)S(\tau)}{a+A(\tau)}d\tau\,
\exp\left((d+\delta)t-m_2\,\phi\,\alpha\,\int_{0}^{t}\left(
\frac{X(\tau)}{c+X(\tau)}d\tau\right)\right)\right]>0 
\quad\mbox{for}\quad t\in[0,t_1].
\end{aligned}
\end{equation*}
Then, \(A'(t_1)=\gamma+\sigma(S(t_1)+I(t_1))>0\). 
This is a contradiction. Hence, \(I(t)>0\) for all \(t\in[0,t_0)\).
Finally, from the second equation of subsystem \eqref{subsystem}, 
we have
\[
\frac{dA}{dt}=\gamma+\sigma(S+I)-\eta A.
\] 
Integration gives
\[
A(t)=A(0)\,e^{\eta t}+e^{\eta t}\,
\int_{0}^{t}\left(\gamma+\sigma(S(\tau)
+I(\tau))\right)e^{-\eta t}dt>0,
\]
that is, \(A(t)>0\) for all \(t\in(0,T)\).
\end{proof}

\mref{thm1}~gives a region $\mathcal{D}$ that attracts all solutions 
initiating inside the interior of the positive octant.

\begin{theorem}
\label{thm1}
Let $M :=\max\{X(0),K\}$ and
$$
\mathcal{D}:=\left\{(X,S,I,A)\in \mathbb{R}^{4}_+: 0\leq X
\leq M ,0\leq X+S+I\leq \frac{(r+4d) M}{4d},0\leq A
\leq \frac{4rd+\sigma(r+4d)M}{4\eta rd}\right\}.
$$
Every solution of system \eqref{state1}
that starts in $\mathcal{D}$ is uniformly bounded.
\end{theorem}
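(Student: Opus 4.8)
The plan is to establish, one at a time, the three nontrivial bounds that define $\mathcal{D}$, exploiting the cascading structure of \eqref{state1}: the first equation controls $X$ by itself (from above), the sum of the first three equations controls $N := X+S+I$, and then the fourth equation controls $A$ once $S+I$ is known to be bounded. Throughout, the previous proposition guarantees $X,S,I,A>0$, so every subtracted term in \eqref{state1} may be discarded when passing to differential inequalities, and each step reduces to a scalar comparison argument.

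First I would bound $X$. Since $S,I,X\ge 0$, the first equation of \eqref{state1} gives $\dot X \le rX\bigl(1-\tfrac{X}{K}\bigr)$. Comparing with the logistic equation $\dot u = ru\bigl(1-\tfrac{u}{K}\bigr)$, $u(0)=X(0)$, whose solution stays monotonically between $X(0)$ and $K$, the comparison principle for scalar ODEs yields $0\le X(t)\le \max\{X(0),K\}=M$ for all $t\ge 0$.

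Next I would bound $N=X+S+I$. Adding the first three equations of \eqref{state1}, the Holling terms recombine and
\begin{equation*}
\dot N = rX\Bigl(1-\tfrac{X}{K}\Bigr) -(1-m_1)\tfrac{\alpha XS}{c+X} -(1-m_2)\tfrac{\phi\alpha XI}{c+X} - dS-(d+\delta)I .
\end{equation*}
Since the conversion efficiencies satisfy $m_1,m_2\le 1$ and $\delta\ge 0$, the last three (recombined) terms are nonpositive; discarding them and adding $dN$ to both sides gives
\begin{equation*}
\dot N + dN \le rX\Bigl(1-\tfrac{X}{K}\Bigr)+dX \le \tfrac{rK}{4}+dM \le \tfrac{(r+4d)M}{4},
\end{equation*}
where I used $\max_{X\ge 0} rX\bigl(1-\tfrac{X}{K}\bigr)=\tfrac{rK}{4}\le \tfrac{rM}{4}$ together with $X\le M$. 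Solving this linear differential inequality (comparison with $\dot v + dv = \tfrac{(r+4d)M}{4}$) and using $N(0)=X(0)+S(0)+I(0)\le \tfrac{(r+4d)M}{4d}$, which holds because the solution starts in $\mathcal{D}$, we obtain $0\le X(t)+S(t)+I(t)\le \tfrac{(r+4d)M}{4d}$ for all $t\ge 0$.

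Finally I would bound $A$. From the fourth equation and $0\le S+I\le X+S+I\le\tfrac{(r+4d)M}{4d}$,
\begin{equation*}
\dot A = \gamma+\sigma(S+I)-\eta A \le \gamma + \sigma\,\tfrac{(r+4d)M}{4d} - \eta A ,
\end{equation*}
and comparing with the associated linear equation $\dot w + \eta w = \gamma + \sigma\tfrac{(r+4d)M}{4d}$, together with the fact that $A(0)$ meets the bound defining $\mathcal{D}$, yields $0\le A(t)\le \tfrac{4rd+\sigma(r+4d)M}{4\eta rd}$ for all $t\ge 0$. Collecting the three estimates shows that any solution starting in $\mathcal{D}$ stays in $\mathcal{D}$, hence is uniformly bounded. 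The only step needing genuine care is the sign bookkeeping for $\dot N$ — in particular the (implicit) assumption $m_1,m_2\le 1$ that renders the recombined Holling terms harmless; everything else is a routine application of the scalar comparison principle.
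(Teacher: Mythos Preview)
Your proof is correct and follows essentially the same three-step cascade as the paper: bound $X$ by logistic comparison, bound $X+S+I$ via the linear differential inequality $\dot N + dN \le \tfrac{(r+4d)M}{4}$, then bound $A$ via $\dot A + \eta A \le \gamma + \sigma\tfrac{(r+4d)M}{4d}$. The only minor differences are cosmetic: you phrase the conclusion as invariance of $\mathcal{D}$ (using the initial condition at each step), whereas the paper phrases it as attraction via $\limsup$; and you make explicit the implicit hypothesis $m_1,m_2\le 1$ needed to discard the recombined Holling terms, which the paper uses without comment.
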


\begin{proof}
From the first equation of system \eqref{state1}, we have
\begin{equation}
\label{eq 2}
\frac{dX}{dt} = r X\left[1 - \frac{X}{K}\right]
- \frac{\alpha XS}{c+X}- \frac{\phi \alpha XI}{c+X}
\leq r X\left[1 - \frac{X}{K}\right],
\end{equation}
which implies that $\displaystyle \frac{dX}{dt}\leq rX\left(1-\frac{X}{K}\right)$.
This is a separable ordinary differential equation and,
taking $X(0)=X_0$, we get
\[
\lim\limits_{t\rightarrow \infty}X(t)\leq M.
\]
Letting $W(t)=X(t)+S(t)+I(t)$ at any time $t$,
we get from the first three equations of \eqref{state1} that
\begin{equation*}
\begin{split}
\frac{dW(t)}{dt}
&= r X\left[1 - \frac{X}{K}\right]
- \frac{\alpha XS}{c+X}- \frac{\phi \alpha XI}{c+X}
+\frac{m_1 \alpha XS}{c+X}
+ \frac{m_2 \phi\alpha XI}{c+X} -dS- (d+\delta)I\\
&=r X\left[1 - \frac{X}{K}\right] - \left(\frac{\alpha XS}{c+X}
- \frac{m_1 \alpha XS}{c+X}\right)-\left(\frac{\phi\alpha XI}{c+X}
- \frac{m_2 \phi\alpha XI}{c+X}\right)
+ \underline{dX}-dS-(d+\delta)I-\underline{dX}\\
&= r X\left[1-\frac{X}{K}\right] +dX-d(S+I+X)-\delta I\\
&\leq r X\left[1-\frac{X}{K}\right] +dX-d(S+I+X)
=r X\left[1-\frac{X}{K}\right] +dX-dW\\
&\leq \frac{r}{4}K+dM-dW\\
&=\left(\frac{r+4d}{4}\right) M-dW,
\end{split}
\end{equation*}
that is, $\displaystyle \frac{dW}{dt}+dW\leq\left(\frac{r+4d}{4}\right) M$. Hence,
\begin{equation*}
\lim_{t\rightarrow\infty}W(t)\leq\left(\frac{r+4d}{4d}\right)M.
\end{equation*}
Note that $\displaystyle r X\left[1-\frac{X}{K}\right]$
is a quadratic expression in $X$ and its maximum value is
$\displaystyle \frac{r K}{4}$.
From the last equation of system \eqref{state1}, we get
\begin{equation*}
\begin{split}
\frac{dA}{dt}
&=\gamma+ \sigma(S+I) - \eta A\\
&\leq \gamma+\sigma\left(\frac{r+4d}{4d}\right)M-\eta A\\
&\leq \frac{4\gamma d+\sigma(r+4d)}{4d}M-\eta A,
\end{split}
\end{equation*}
that is,
$$
\frac{dA}{dt}+\eta A\leq \frac{4\gamma d+\sigma(r+4d)}{4d}M.
$$
We conclude that
$$
\lim\limits_{t\rightarrow\infty}A(t)
\leq \frac{dA}{dt}+\eta A\leq \frac{4\gamma d+\sigma(r+4d)}{4\eta d}M
$$
and, therefore, all solutions of system \eqref{state1}
are attracted to $\mathcal{D}$.
\end{proof}


\subsection{Equilibria and stability}
\label{sec3.2}

To get the fixed points, we put the right-hand side
of system \eqref{state1} equal to zero:
\begin{equation}
\label{eq3}
\begin{cases}
\displaystyle r X\left[1 - \frac{X}{K}\right] 
- \frac{\alpha XS}{c+X}- \frac{\phi \alpha XI}{c+X}=0,\\[0.3cm]
\displaystyle \frac{m_1 \alpha XS}{c+X} 
- \frac{\lambda AS}{a+A} -dS=0,\\[0.3cm]
\displaystyle \frac{m_2 \phi\alpha XI}{c+X} 
+ \frac{\lambda AS}{a+A}- (d+\delta)I=0,\\[0.3cm]
\gamma+ \sigma(S+I) - \eta A=0.
\end{cases}
\end{equation}
It follows from \eqref{eq3} that system \eqref{state1} has four equilibrium points:
\begin{itemize}
\item[(i)] The axial equilibrium point $E_0 = \left(0,0,0,\frac{\gamma}{\eta}\right)$;
\item[(ii)] The pest free equilibrium point $E_1 = \left(K,0,0,\frac{\gamma}{\eta}\right)$;
\item[(iii)] The susceptible pest free equilibrium point
$E_2 = \left(\bar{X},0,\bar{I},\bar{A}\right)$
with
\begin{equation*}
\begin{split}
\bar{X}&=\frac{c(d+\delta)}{ m_2\phi\alpha-(d+\delta)},\\
\bar{I}&=\frac{r(c+\bar{X})(K-\bar{X})}{\phi\alpha K}
=\frac{K\alpha\phi m_2-(K+c)(d+\delta)}{K\left(\alpha\phi m_2-d-\delta\right)^2}crm_2,\\
\bar{A}&=\frac{\gamma+\sigma \bar{I}}{\eta}=\frac{K\gamma\left(\phi\alpha m_2
-(d+\delta)\right)^2+K\phi\alpha\sigma m_2^2 cr
-(K+c)(d+\delta)crm_2\sigma}{\left(\phi\alpha m_2-(d+\delta)\right)^2},
\end{split}
\end{equation*}
that exists if, and only if,
\begin{equation}
\label{eq:iii:cond:HPFE}
K>\bar{X}\Rightarrow d+\delta<\frac{m_2\phi\alpha K}{c+K};
\end{equation}
\item[(iv)] The coexistence or endemic equilibrium point
$E^* = \left(X^*,S^*,I^*,A^*\right)\neq0$.
\end{itemize}
The condition \eqref{eq:iii:cond:HPFE} indicates that if the death rate of the infected pest is low,
then $E_2$ exists. Equivalently, it means that if the conversion factor $m_2$ of the infected pest
is high, then $E_2$ exists. Regarding the coexistence equilibrium,
$E^*$ is the steady state solution where pest persist in the crop biomass population.
It is obtained by setting each equation of the system \eqref{state1} equal to zero, that is,
\[
\frac{dX}{dt}=\frac{dS}{dt}=\frac{dI}{dt}=\frac{dA}{dt}=0.
\]
From the second equation of system \eqref{eq3}, we get
$$
\left(\frac{m_1 \alpha X}{c+X} -  \frac{\lambda A}{a+A} -d\right)S=0,
$$
which implies that
$$
\frac{m_1 \alpha X}{c+X} -  \frac{\lambda A}{a+A} -d=0.
$$
We obtain
\[
X^*=\frac{c(\lambda A+d(a+A))}{(m_1\alpha-d)(a+A)-\lambda A}.
\]
From the first equation of system \eqref{eq3}, we have
\[
r\left(1-\frac{X}{K}\right)-\frac{\alpha S}{c+X}-\frac{\alpha \alpha I}{c+X}=0
\]
and it follows that
\[
\frac{\alpha S}{c+X}+\frac{\phi\alpha I}{c+X}=r\left(1-\frac{X}{K}\right)=\frac{r(K-X)}{K}
\]
and
\[
\frac{\alpha S+\phi\alpha I}{c+X}=\frac{r(K-X)}{K}.
\]
Therefore,
\begin{equation}
\label{eq:star}
\alpha S+\phi\alpha I=\frac{r(K-X)(c+X)}{K}
\end{equation}
and from the last equation  of system \eqref{eq3} we have
\begin{equation}
\label{eq:star2}
\gamma+\sigma(S+I)-\eta A=0.
\end{equation}
Solving the system of equations \eqref{eq:star} and \eqref{eq:star2}
we get
\[
S^*=\frac{\alpha K\phi(\eta A-\gamma)-\sigma r(K-X^*)(c+X^*)}{\sigma\alpha K(\phi-1)}
\]
and
\[
I^*=\frac{r\sigma(K-X^*)(c+X^*)-\alpha K(\eta A^*-\gamma)}{\sigma\alpha K(\phi-1)}.
\]
We conclude that $E^*=(X^*,S^*,I^*,A^*)$ is the coexistence steady state where
\begin{eqnarray*}
X^*&=&\frac{c(\lambda A+d(a+A^*))}{(m_1\alpha-d)(a+A^*)-\lambda A^*},\\
S^*&=&\frac{\alpha K\phi(\eta A^*-\gamma)-\sigma r(K-X^*)(c+X^*)}{\sigma\alpha K(\phi-1)},\\
I^*&=&\frac{r\sigma(K-X^*)(c+X^*)-\alpha K(\eta A^*-\gamma)}{\sigma\alpha K(\phi-1)}
\end{eqnarray*}
and $A^*$ is the non-negative solution of equation
\[
f(A)=D_0A^4+D_1A^3+D_2A^2+D_3A+D_4=0
\]
whose coefficients are given by
\begin{equation*}
\begin{split}
D_0&=m_1+\frac{m_1(d-\delta)+\lambda m_1\delta-\phi m_2(d+\lambda)}{\alpha(\phi m_2-m_1)},\\
D_1&=\frac{\left[(K-2c)m_2\phi\alpha+(3c-K)\delta\right](d+\lambda)
+\alpha(2c-K)(d+\lambda+\delta)}{\alpha^2m_1(\phi m_2-m_1)},\\
D_2&=-m_1\lambda\gamma-\frac{\left[m_1\lambda-m_2\phi\lambda\right]\left[\alpha^2 m_1
K\gamma+\sigma r(\alpha Km_1-d)\right]}{\sigma m_1\alpha(\phi m_2-m_1)}
+\frac{[(d+\delta)m_1-m_2\phi d][\sigma r\lambda+\alpha^2 m_1
K\eta]}{\sigma m_1\alpha(\phi m_2-m_1)},\\
D_3&=\frac{[(d+\delta)m_1-m_2\phi d][\alpha^2m_1K\gamma
+\sigma r(\alpha Km_1-d)]}{\sigma m_1\alpha(\phi m_2-m_1)},\\
D_4&=\frac{\left[ac^2d^2(\phi-1)+ac\eta^2d\delta(\phi-1)
+c^2d\sigma r\delta(d+\lambda)\right]}{\sigma r m_1(\phi m_2-m_1)}.
\end{split}
\end{equation*}

The stability analysis for \eqref{state1} is studied
by linearization of the nonlinear system.
More precisely, we study the stability of an equilibrium point looking to the
eigenvalues of the corresponding Jacobian, which are functions of the model parameters.
The Jacobian matrix $J$ for system \eqref{state1} is given by
\[
J(X,S,I,A)=
\left[
\begin{array}{cccc}
\displaystyle \frac{\partial f_1}{\partial X}
& \displaystyle \frac{\partial f_1}{\partial S}
& \displaystyle \frac{\partial f_1}{\partial I} 
& \displaystyle \frac{\partial f_1}{\partial A}\\[0.3cm]
\displaystyle \frac{\partial f_2}{\partial X}
&\displaystyle \frac{\partial f_2}{\partial S}
&\displaystyle \frac{\partial f_2}{\partial I} 
&\displaystyle \frac{\partial f_2}{\partial A}\\[0.3cm]
\displaystyle \frac{\partial f_3}{\partial X}
&\displaystyle \frac{\partial f_3}{\partial S}
&\displaystyle \frac{\partial f_3}{\partial I} 
&\displaystyle \frac{\partial f_3}{\partial A} \\[0.3cm]
\displaystyle \frac{\partial f_4}{\partial X}
&\displaystyle \frac{\partial f_4}{\partial S}
&\displaystyle \frac{\partial f_4}{\partial I} 
&\displaystyle \frac{\partial f_4}{\partial A}
\end{array}
\right]
\]
\[
=\left[
\begin{array}{cccc}
\displaystyle r \left[1 - \frac{2X}{K}\right] - \frac{ \alpha cS}{(c+X)^2}
-\frac{\phi\alpha cI}{(c+X)^2}
& \displaystyle -\frac{\alpha X}{c+X}
& \displaystyle -\frac{\phi\alpha X}{c+X}& 0\\[0.3cm]
\displaystyle \frac{m_1\alpha cS}{(c+X)^2}&
\displaystyle \frac{m_1\alpha X}{c+X}
-\frac{\lambda A}{a+A}-d & 0 &
\displaystyle -\frac{\lambda a S}{(a+A)^2}\\[0.3cm]
\displaystyle \frac{m_2\phi\alpha cI}{(c+X)^2}&
\displaystyle \frac{\lambda A}{a+A} &
\displaystyle \frac{m_2\phi\alpha X}{c+X}-d-\delta&
\displaystyle \frac{\lambda aS}{(a+A)^2} \\[0.3cm]
0&\sigma&\sigma&-\eta
\end{array}
\right],
\]
where
\begin{eqnarray*}
f_1 &=& r X\left[1 - \frac{X}{K}\right]
- \frac{\alpha XS}{c+X}- \frac{\phi \alpha XI}{c+X},\\
f_2 &=&  \frac{m_1 \alpha XS}{c+X} -  \frac{\lambda AS}{a+A} -dS,\\
f_3 &=& \frac{m_2 \phi\alpha XI}{c+X} + \frac{\lambda AS}{a+A}- (d+\delta)I,\\
f_4 &=&\gamma+ \sigma(S+I) - \eta A.
\end{eqnarray*}

Next, we investigate the stability of each one of the four equilibrium points.

\begin{theorem}
System \eqref{state1} is always unstable around the axial equilibrium $E_0$.
\end{theorem}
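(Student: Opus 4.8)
The plan is to linearize system \eqref{state1} at the axial equilibrium $E_0=\left(0,0,0,\frac{\gamma}{\eta}\right)$ and to exhibit an eigenvalue of the corresponding Jacobian with strictly positive real part, which immediately yields instability of $E_0$. First I would substitute $X=0$, $S=0$, $I=0$ and $A=\frac{\gamma}{\eta}$ into the general Jacobian $J(X,S,I,A)$ displayed above. Every off‑diagonal entry of the first three columns carries a factor $X$, $S$ or $I$ and hence vanishes at $E_0$; the saturation terms survive only through the value $\frac{\lambda A}{a+A}\big|_{A=\gamma/\eta}=\frac{\lambda\gamma}{a\eta+\gamma}$, while $\frac{\lambda a S}{(a+A)^2}=0$ because $S=0$. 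One is thus left with
\[
J(E_0)=
\begin{pmatrix}
r & 0 & 0 & 0\\
0 & -d-\dfrac{\lambda\gamma}{a\eta+\gamma} & 0 & 0\\
0 & \dfrac{\lambda\gamma}{a\eta+\gamma} & -(d+\delta) & 0\\
0 & \sigma & \sigma & -\eta
\end{pmatrix}.
\]

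Next I would observe that $J(E_0)$ is (block) lower triangular --- indeed its first column is $(r,0,0,0)$ --- so its spectrum can be read off the diagonal: the eigenvalues are $\mu_1=r$, $\mu_2=-d-\frac{\lambda\gamma}{a\eta+\gamma}$, $\mu_3=-(d+\delta)$ and $\mu_4=-\eta$. Equivalently, the characteristic polynomial factors as $(\mu-r)\bigl(\mu+d+\tfrac{\lambda\gamma}{a\eta+\gamma}\bigr)(\mu+d+\delta)(\mu+\eta)$.

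Finally, since the net intrinsic growth rate of the crop biomass satisfies $r>0$, the eigenvalue $\mu_1=r$ is real and positive, while $\mu_2,\mu_3,\mu_4<0$. Hence $E_0$ is a hyperbolic saddle point of \eqref{state1} for \emph{all} admissible parameter values, which proves that the system is always unstable around $E_0$. I do not expect any genuine obstacle here: the argument is a single eigenvalue computation. The only point that requires a little care is the correct evaluation of the mass‑action/Holling saturation terms $\frac{\lambda A}{a+A}$ and $\frac{\lambda a S}{(a+A)^2}$ at $S=0$, $A=\gamma/\eta$ --- it is precisely the vanishing of $S$ and $I$ at $E_0$ that makes $J(E_0)$ triangular and the conclusion immediate.
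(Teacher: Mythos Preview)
Your proposal is correct and follows essentially the same approach as the paper: evaluate the Jacobian at $E_0$, observe that the resulting matrix is lower triangular so that the eigenvalues are the diagonal entries, and conclude instability from the positive eigenvalue $r$. The paper writes out the characteristic polynomial explicitly rather than invoking triangularity, but the computation and the conclusion are identical.
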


\begin{proof}
The Jacobian matrix at the axial equilibrium $E_0$ is
\begin{equation}
\label{eqn6}
J\left(0,0,0,\frac{\gamma}{\eta}\right)=\left[
\begin{array}{cccc}
r  & 0 & 0& 0\\
0& -\frac{\lambda \gamma}{a\eta+\gamma}-d & 0& 0\\
0& \frac{\lambda \gamma}{a\eta+\gamma} & -d-\delta&0 \\
0&\sigma&\sigma&-\eta
\end{array}
\right],
\end{equation}
whose characteristic equation  is given by
\begin{equation}
\label{eqn7}
\left|\rho-J(E_0)\right|=\left|
\begin{array}{cccc}
\rho-r  & 0 & 0& 0\\
0& \rho-(\frac{\lambda \gamma}{a\eta+\gamma}-d) & 0& 0\\
0& -\frac{\lambda \gamma}{a\eta+\gamma} & \rho-(d+\delta)&0 \\
0&-\sigma&-\sigma&\rho+\eta
\end{array}
\right|=0.
\end{equation}
From
\[
(\rho-r)\left(\rho+\frac{\lambda\gamma}{a \eta+\gamma}
+d\right)(\rho+d+\delta)(\rho+\eta)=0,
\]
we obtain that the corresponding eigenvalues are
\[
\rho_1=r>0, \quad
\rho_2=-\eta<0,\quad
\rho_3=-d-\delta<0,\quad
\rho_4=\frac{-\lambda\gamma}{a\eta+\gamma}-d<0.
\]
Because $\rho_1=r>0$, we conclude that the axial
equilibrium $E_0$ is always unstable.
\end{proof}

In contrast with the axial equilibrium $E_0$,
which is always unstable, the pest free steady
state $E_1$ can be stable or unstable.
Precisely, \mref{stab:thm:ii}~asserts 
that the pest free steady state $E_{1}=(K,0,0,\frac{\gamma}{\eta})$ 
is locally asymptotically stable if
$d> \displaystyle\frac{m_1\alpha K}{c+K}-\frac{\lambda\gamma}{a\eta+\gamma}$ 
and $d>\displaystyle\frac{m_2 \phi\alpha K}{c+K}-\delta$.
This means $E_{1}=(K,0,0,\frac{\gamma}{\eta})$ is stable when
\[
d>\max\left\{\displaystyle\frac{m_1\alpha K}{c+K}\frac{\lambda\gamma}{a\eta+\gamma},
~\displaystyle\frac{m_2 \phi\alpha K}{c+K}-\delta\right\},
\]
which tells us that if the natural death rate $d$ of the pest population 
is high, then the system will approach to the pest free population. 
This, biologically, implies that the environment will be free 
of pest for that particular situation.

\begin{theorem}
\label{stab:thm:ii}	
The pest free steady state $E_1$ is stable if
\begin{equation}
\label{stabCond:thm:ii}
\frac{m_1\alpha K}{c+K}<\frac{\lambda\gamma}{a\eta+\gamma}+d
\quad \text{and} \quad
\frac{m_2\phi\alpha K}{c+K}<d+\delta
\end{equation}
and unstable if
\begin{equation}
\label{UnstabCond:thm:ii}
\frac{m_1\alpha K}{c+K}>\frac{\lambda\gamma}{a\eta+\gamma}+d
\quad \text{or} \quad \frac{m_2\phi\alpha K}{c+K}>d+\delta.
\end{equation}
\end{theorem}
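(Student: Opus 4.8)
The plan is to linearize system \eqref{state1} at the pest free equilibrium $E_1=(K,0,0,\gamma/\eta)$ and determine the signs of the eigenvalues of the corresponding Jacobian. First I would evaluate the general Jacobian $J(X,S,I,A)$ displayed above at $E_1$: substituting $X=K$ gives $r[1-2X/K]=-r$; every entry carrying a factor $S$ or $I$ in its numerator (namely the correction terms in the $(1,1)$ entry and the $(2,1)$, $(3,1)$ entries) vanishes because $S=I=0$; and $\lambda A/(a+A)$ collapses to $\lambda\gamma/(a\eta+\gamma)$. The resulting matrix $J(E_1)$ has first column $(-r,0,0,0)^{\top}$, so the factor $\rho_1=-r$ splits off at once, and the remaining $3\times 3$ block on the $S,I,A$ coordinates is lower triangular.

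Next I would read the spectrum directly off that triangular block. Its diagonal entries are $\frac{m_1\alpha K}{c+K}-\frac{\lambda\gamma}{a\eta+\gamma}-d$, $\frac{m_2\phi\alpha K}{c+K}-(d+\delta)$ and $-\eta$, so the characteristic polynomial factors completely and the four eigenvalues of $J(E_1)$ are
\[
\rho_1=-r,\qquad
\rho_2=\frac{m_1\alpha K}{c+K}-\frac{\lambda\gamma}{a\eta+\gamma}-d,\qquad
\rho_3=\frac{m_2\phi\alpha K}{c+K}-(d+\delta),\qquad
\rho_4=-\eta.
\]

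Finally I would invoke the standard linearization (Hartman--Grobman) criterion. Since $\rho_1=-r<0$ and $\rho_4=-\eta<0$ hold unconditionally, $E_1$ is locally asymptotically stable exactly when $\rho_2<0$ and $\rho_3<0$, which is precisely the pair of inequalities in \eqref{stabCond:thm:ii}; and $E_1$ is unstable as soon as one of $\rho_2,\rho_3$ is strictly positive, which is \eqref{UnstabCond:thm:ii}. I do not expect any genuine obstacle: the only points demanding care are the correct evaluation of $J$ at $E_1$ and the observation that the reduced $3\times 3$ block is triangular, which renders the whole spectrum explicit and bypasses any actual Routh--Hurwitz computation.
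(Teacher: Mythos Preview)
Your proposal is correct and follows essentially the same approach as the paper: both evaluate the Jacobian at $E_1$, obtain the same four explicit eigenvalues, and conclude via the sign criterion. Your observation that the first column reduces to $(-r,0,0,0)^{\top}$ and that the remaining $3\times 3$ block is lower triangular is a mild streamlining of the paper's explicit factorization of the characteristic polynomial, but the underlying computation is identical.
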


\begin{proof}
The Jacobian matrix $J(E_1)$, at the pest free equilibrium
point $E_1=\left(K,0,0,\frac{\gamma}{\eta}\right)$, is given by
\[
J\left(K,0,0,\frac{\gamma}{\eta}\right)
=\left[
\begin{array}{cccc}
-r   & -\frac{\alpha K}{c+K} & -\frac{\phi\alpha K}{c+K}& 0\\
0& \frac{m_1\alpha K}{c+K}-\frac{\lambda \gamma}{a\eta+\gamma}-d & 0&0\\
0& \frac{\lambda \gamma}{a\eta+\gamma} & \frac{m_2\phi\alpha K}{c+K}-d-\delta&0 \\
0&\sigma&\sigma&-\eta
\end{array}
\right].
\]
The characteristic equation in $\rho$ at $E_1$ is
\[
|\rho I-J(E_1)|=\left|
\begin{array}{cccc}
\rho+r  & \frac{\alpha K}{c+K} & \frac{\phi\alpha K}{c+K}& 0\\
0& \rho-\left(\frac{m_1\alpha K}{c+K}
-\frac{\lambda \gamma}{a\eta+\gamma}-d\right) & 0&0\\
0& -\frac{\lambda \gamma}{a\eta+\gamma} &\rho
-\left(\frac{m_2\phi\alpha K}{c+K}-d-\delta\right)&0 \\
0&-\sigma&-\sigma&\rho+\eta
\end{array}
\right|=0,
\]
which gives
\[
(\rho+r)\left(\rho-\frac{m_1\alpha K}{c+K}+\frac{\lambda\gamma}{a\eta+\gamma}
+d\right)\left(\rho-\frac{m_2\phi\alpha K}{c+K}+d+\delta\right)(\rho+\eta) = 0.
\]
Thus, the eigenvalues are $-r$,
$\displaystyle \frac{m_1\alpha K}{c+K}-\frac{\lambda\gamma}{a\eta+\gamma}-d$,
$\displaystyle \frac{m_2\phi\alpha}{c+K}-d-\delta$, and $-\eta$.
Clearly, $J(E_1)$ has two negative eigenvalues, namely, $-r$ and $-\eta$.
Therefore, $E_1$ is stable or unstable, respectively if \eqref{stabCond:thm:ii}
or \eqref{UnstabCond:thm:ii} holds.
\end{proof}

The stability condition \eqref{stabCond:thm:ii}
for the pest free steady state $E_1$,
given by \mref{stab:thm:ii}, implies that
\[
\alpha K<\frac{\lambda\gamma(c+K)+d(a\eta+\gamma)(c+K)}{m_1(a\eta+\gamma)}
\]
and
\[
\alpha K<\frac{(c+K)(d+\delta)}{m_2\phi}.
\]
So, $E_1$ is stable if
$$
\alpha K<\min\left\{\frac{(c+K)(\lambda\gamma+d(a\eta+\gamma))}{m_1(a\eta+\gamma)},
\frac{(c+K)(d+\delta)}{m_2\phi }\right\}=:R.
$$
If we define
\[
R_{0}:= \frac{\alpha\,K}{R},
\]
then we can rewrite \mref{stab:thm:ii}~by saying that the pest free equilibrium
$E_1$ is locally asymptotically stable if $R_0<1$ and unstable if $R_0>1$.
The condition $R_0<1$ for the stability of the pest free equilibrium point $E_1$
indicates that if the death rates of the pest population are high, then the system
may stabilize to the pest free steady state $E_1$. Further, it can be noticed that the
existence of $E_2$ destabilizes $E_1$.

\begin{theorem}
\label{stab:thm:iii}	
The susceptible pest free equilibrium $E_2$
is locally asymptotically stable if
\begin{equation}
\label{stabCond:thm:iii}
\frac{m_1(d+\delta)(a+\bar{A})}{(a+\bar{A})(\lambda+d)
-\lambda a}<m_2\phi<\frac{(K+c)(d+\delta)}{\alpha(K-c)}.
\end{equation}
\end{theorem}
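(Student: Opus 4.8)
The plan is to linearize \eqref{state1} at $E_2=(\bar{X},0,\bar{I},\bar{A})$ and exploit the sparsity created by the vanishing susceptible component to factor the characteristic polynomial, rather than running a full four‑dimensional Routh--Hurwitz test. First I would specialize the Jacobian $J$ displayed above: since $S=0$, the entry $\partial f_2/\partial X$, the two occurrences of $\lambda aS/(a+A)^2$, and the $S$‑correction in $\partial f_1/\partial X$ all vanish; moreover $\bar{X}$ satisfies $m_2\phi\alpha\bar{X}/(c+\bar{X})=d+\delta$ (the third equation of \eqref{eq3} with $S=0$, $\bar{I}\neq0$), so $\partial f_3/\partial I=0$ at $E_2$. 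Consequently the second row of $J(E_2)$ is $(0,\,J_{22},\,0,\,0)$ with $J_{22}=\frac{m_1\alpha\bar{X}}{c+\bar{X}}-\frac{\lambda\bar{A}}{a+\bar{A}}-d$, and the last column of $J(E_2)$ is $(0,0,0,-\eta)^{\!\top}$.

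Expanding $\det(\rho I-J(E_2))$ along the second row and then along the last column yields the factorization
\[
\det(\rho I-J(E_2))=(\rho-J_{22})\,(\rho+\eta)\,\bigl(\rho^{2}-J_{11}\rho+P\bigr),
\qquad P:=\frac{m_2\phi^{2}\alpha^{2}c\,\bar{X}\bar{I}}{(c+\bar{X})^{3}},
\]
where $J_{11}=r\bigl(1-\tfrac{2\bar{X}}{K}\bigr)-\tfrac{\phi\alpha c\bar{I}}{(c+\bar{X})^{2}}$. Hence the eigenvalues are $-\eta<0$, $J_{22}$, and the two roots of $\rho^{2}-J_{11}\rho+P$. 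Because $E_2$ exists, condition \eqref{eq:iii:cond:HPFE} forces $\bar{X}>0$ and $\bar{I}>0$, so $P>0$ automatically; therefore both roots of the quadratic have negative real part precisely when $J_{11}<0$, and $E_2$ is locally asymptotically stable exactly when $J_{22}<0$ and $J_{11}<0$.

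It then remains to check that these two scalar inequalities are the two inequalities in \eqref{stabCond:thm:iii}. For $J_{22}<0$: using $\alpha\bar{X}/(c+\bar{X})=(d+\delta)/(m_2\phi)$ and $\frac{\lambda\bar{A}}{a+\bar{A}}+d=\frac{(\lambda+d)\bar{A}+ad}{a+\bar{A}}$, the inequality rearranges to $m_1(d+\delta)(a+\bar{A})<m_2\phi\bigl[(\lambda+d)\bar{A}+ad\bigr]$; since $(\lambda+d)\bar{A}+ad=(a+\bar{A})(\lambda+d)-\lambda a>0$, this is exactly the left half of \eqref{stabCond:thm:iii}. For $J_{11}<0$: substituting $\bar{I}=\frac{r(c+\bar{X})(K-\bar{X})}{\phi\alpha K}$ collapses $J_{11}$ to $\frac{r\bar{X}\,(K-c-2\bar{X})}{K(c+\bar{X})}$, so (recall $\bar{X}>0$) $J_{11}<0$ iff $\bar{X}>\frac{K-c}{2}$; plugging in $\bar{X}=\frac{c(d+\delta)}{m_2\phi\alpha-(d+\delta)}$, where $m_2\phi\alpha-(d+\delta)>0$ by \eqref{eq:iii:cond:HPFE}, and clearing the positive denominator gives $(K+c)(d+\delta)>m_2\phi\alpha(K-c)$, i.e.\ the right half of \eqref{stabCond:thm:iii}; this last step tacitly uses $K>c$, which is implicit in the very form in which \eqref{stabCond:thm:iii} is written.

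The main obstacle is entirely bookkeeping: one must simplify $J_{11}$ through the closed form of $\bar{I}$ and rewrite $J_{22}<0$ via $\alpha\bar{X}/(c+\bar{X})=(d+\delta)/(m_2\phi)$ carefully enough to recognize the two resulting sign conditions as precisely the two inequalities of \eqref{stabCond:thm:iii}; the conceptual input — that $S=0$ makes $J(E_2)$ block‑triangular, so that $P>0$ comes for free and no genuine four‑variable Routh--Hurwitz argument is needed — is immediate.
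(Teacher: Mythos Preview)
Your proposal is correct and follows essentially the same approach as the paper: linearize at $E_2$, exploit $S=0$ and $F_{33}=m_2\phi\alpha\bar{X}/(c+\bar{X})-(d+\delta)=0$ to factor the characteristic polynomial as $(\rho+\eta)(\rho-F_{22})(\rho^2-F_{11}\rho+C)$ with $C>0$, and then reduce stability to $F_{11}<0$ and $F_{22}<0$. Your write-up is in fact more explicit than the paper's, which simply asserts the equivalences $F_{11}<0\Leftrightarrow m_2\phi<\frac{(K+c)(d+\delta)}{\alpha(K-c)}$ and $F_{22}<0\Leftrightarrow \frac{m_1(d+\delta)(a+\bar{A})}{(a+\bar{A})(\lambda+d)-\lambda a}<m_2\phi$ without showing the intermediate algebra (your substitution of $\bar{I}$ to collapse $J_{11}$ to $\frac{r\bar{X}(K-c-2\bar{X})}{K(c+\bar{X})}$, and your observation that $K>c$ is implicitly assumed, are details the paper omits).
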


\begin{proof}
At the susceptible pest pest free 
fixed point $E_2=(\bar{X},0,\bar{I},\bar{A})$,
the Jacobian matrix $J(E_2)$ is given by
\[
J(E_2)=\left[
\begin{array}{cccc}
F_{11}  & -\frac{\alpha \bar{X}}{c+\bar{X}}
& -\frac{\phi\alpha \bar{X}}{c+\bar{X}}& 0\\
0& F_{22} & 0&0\\
\frac{m_2\phi\alpha c\bar{I}}{(c+\bar{X})^2}
& \frac{\lambda \bar{A}}{a+\bar{A}} & F_{33}&0 \\
0&\sigma&\sigma&-\eta
\end{array}
\right],
\]
where
\[
F_{11}=r\left(1-\frac{2\bar{X}}{K}\right)
-\frac{\phi\alpha c\bar{I}}{(c+\bar{X})^2},
\]
\[
F_{22}=\frac{m_1\alpha \bar{X}}{c+\bar{X}}
-\frac{\lambda \bar{A}}{a+\bar{A}}-d,
\]
and
\[
F_{33}=\frac{m_2\phi\alpha \bar{X}}{c+\bar{X}}-d-\delta.
\]
The characteristic equation in $\rho$ is given by
\[
\left|\rho I-J(E_2)\right|
= \left|
\begin{array}{cccc}
\rho-F_{11}  
& \displaystyle \frac{\alpha \bar{X}}{c+\bar{X}}
& \displaystyle \frac{\phi\alpha \bar{X}}{c+\bar{X}}& 0\\[0.3cm]
0& \rho-F_{22} & 0&0\\[0.3cm]
\displaystyle -\frac{m_2\phi\alpha c\bar{I}}{(c+\bar{X})^2}
& \displaystyle -\frac{\lambda \bar{A}}{a+\bar{A}} 
& \rho-F_{33}&0 \\[0.3cm]
0&-\sigma&-\sigma&\rho+\eta
\end{array}
\right|=0,
\]
which gives
\[
(\rho+\eta)(\rho-F_{22})\left[\rho^2-(F_{11}+F_{33})\rho+F_{11}F_{33}
+\frac{m_2\phi\alpha c\bar{I}\phi\alpha \bar{X}}{(c+\bar{X})^3}\right]=0,
\]
that is,
\[
(\rho+\eta)(\rho-F_{22})\left[\rho^2
+(-F_{11})\rho+\frac{m_2\phi\alpha
c\bar{I}\phi\alpha \bar{X}}{(c+\bar{X})^3}\right]=0
\]
or
\[
\left(\rho^2+B\rho+C\right)(\rho+\eta)(\rho-F_{22})=0,
\]
where $B=-F_{11}$ and
$C=\displaystyle
\frac{m_2\phi\alpha c\bar{I}\phi\alpha\bar{X}}{(c+\bar{X})^3}>0$.
The corresponding eigenvalues are
\[
\rho_{1,2}=\frac{-B\pm\sqrt{B^2-4C}}{2},
\quad \rho_3=F_{22}
\quad \text{and} \quad
\rho_4=-\eta.
\]
From the Routh--Hurwitz criteria, with a second degree polynomial, i.e,
$\rho^2+a_1\rho+a_2=0$, the necessary and sufficient condition
for the local stability of the system is that all eigenvalues
must have a negative real part (the condition $a_1>0$ and $a_2>0$ must hold).
Therefore, in our case, if $B>0$, then $\rho_1$ and $\rho_2$ are negative
(since $C>0$). This implies that $E_2$ is locally asymptotically stable
if $F_{11}<0$ and $F_{22}<0$:
\begin{equation*}
r\left(1-\frac{2\bar{X}}{K}\right)<\frac{\phi\alpha c\bar{I}}{(c+\bar{X})^2}
\Leftrightarrow
m_2\phi<\frac{(K+c)(d+\delta)}{\alpha(K-c)}
\end{equation*}
and
\begin{equation*}
F_{22}<0
\Leftrightarrow
\frac{m_1(d+\delta)(a+\bar{A})}{(a+\bar{A})(\lambda+d)-\lambda a}<m_2\phi.
\end{equation*}
Hence, $E_2$ is locally asymptotically stable provided \eqref{stabCond:thm:iii} holds.
\end{proof}

\begin{remark}
If we substitute $\bar{A}$ with
$$
\frac{K\gamma\left(\phi\alpha m_2-(d+\delta)\right)^2+K\phi\alpha\sigma m_2^2 cr
-(K+c)(d+\delta)crm_2\sigma}{\left(\phi\alpha m_2-(d+\delta)\right)^2},
$$
then we can express condition \eqref{stabCond:thm:iii}	
only in terms of the parameter values of the model.
\end{remark}

\mref{stab:thm:iii}~means that when the conversion
rate $m_2$ of the pest governs a moderate value and the
pest infection rate $\lambda$ is high, then it is expected
that the system will stabilize at the steady state
when all the pest become infected.

\newpage

We now investigate the stability of the fourth equilibrium point.

\begin{theorem}
\label{thm:we:def:Ci}
The coexistence steady state $E^*$ is locally asymptotically stable if
\begin{equation}
\label{eqxx}
C_2>0, \quad
C_3>0,\quad
C_4>0,\quad
C_1 C_2-C_3>0
\quad \text{ and } \quad
(C_1C_2-C_3)C_3-C_1^2C_4>0
\end{equation}
with
\begin{equation*}
\begin{split}
C_1&=-(F_{11}+F_{22}+F_{33})+\eta,\\
C_2&=F_{11}F_{22}+F_{11}F_{33}+F_{22}F_{33}-F_{11}\eta
-F_{22}\eta-F_{33}\eta+\frac{m_1\alpha cS^*\alpha X^*}{(c+X^*)^3}
+\frac{m_2\phi\alpha cI^*\phi\alpha X^*}{(c+X^*)^3},\\
C_3&=-F_{11}F_{22}F_{33}+F_{11}F_{22}\eta+F_{11}F_{33}\eta+F_{22}F_{33}\eta\\
&\quad - F_{33}\frac{m_1\alpha cS^*\alpha X^*}{(c+X^*)^2}
-F_{22}\frac{m_2\phi\alpha cI^*\phi\alpha X}{(c+X)^3}
+F_{22}\frac{\sigma\lambda aS^*}{(a+A)^2}-F_{33}\frac{\sigma\lambda aS^*}{(a+A)^2}\\
&\quad -\frac{m_1\alpha cS^*\lambda A^*\phi\alpha X^*}{(c+X^*)^3(a+A^*)^2}
-\frac{\lambda A^*\sigma\lambda aS^*}{(a+A^*)^3}+\frac{m_1\alpha cS^*\alpha
X^*\eta}{(c+X^*)^3}+\frac{m_2\phi\alpha c I^*\phi\alpha X^*\eta}{(c+X^*)^3},\\
C_4&=-F_{11}F_{22}F_{33}\eta-F_{11}F_{22}\sigma
\frac{\lambda aS}{(a+A)^2}+F_{11}F_{33}\sigma\frac{\lambda aS^*}{(a+A^*)^2}
+F_{11}\frac{\lambda A\sigma\lambda aS^*}{(a+A^*)^3}\eta\\
&\quad -F_{33}\frac{m_1\alpha cS^*\alpha X^*\eta}{(c+X^*)^3}
-F_{22}\frac{m_2\phi\alpha cI^*\phi\alpha X^*\eta}{(c+X^*)^3}
-\frac{m_1\alpha cS^*\sigma\alpha X^*\lambda aS^*}{(a+A^*)^2(c+X^*)^3}
-\frac{m_2\phi\alpha cI^*\sigma\alpha X^*\lambda aS^*}{(a+A^*)^2(c+X^*)^3}\\
&\quad + \frac{m_1\alpha cS^*\sigma\phi\alpha X^*\lambda aS^*}{(c+X^*)^3(a+A^*)^2}
+\frac{m_2\phi\alpha cS^*I^*\sigma\phi\alpha X^*\lambda a S^*}{(c+X^*)^3(a+A^*)^2}
-\frac{m_1\alpha c S^*\lambda A\phi\alpha X^*\eta}{(c+X^*)^3(a+A^*)}.
\end{split}
\end{equation*}
\end{theorem}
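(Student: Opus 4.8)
The plan is to compute the Jacobian $J(E^*)$ at the coexistence equilibrium, form its characteristic polynomial, and then apply the Routh--Hurwitz criterion for a degree-four polynomial. Since $E^*$ is a genuine interior point with all four coordinates positive, none of the entries of $J(E^*)$ vanishes for trivial reasons, so unlike the earlier equilibria we cannot hope for a block-triangular structure or a factored characteristic polynomial; the full $4\times 4$ determinant must be expanded.

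First I would write out $J(E^*)$ by substituting $(X^*,S^*,I^*,A^*)$ into the general Jacobian matrix displayed above. Using the equilibrium relations (in particular $\frac{m_1\alpha X^*}{c+X^*}-\frac{\lambda A^*}{a+A^*}-d=0$ from the second equation of \eqref{eq3}) one can simplify the $(2,2)$ entry, but it is cleaner to retain the abbreviations $F_{11},F_{22},F_{33}$ for the three diagonal entries exactly as introduced in the proof of \mref{stab:thm:iii}, now evaluated at $E^*$. The off-diagonal entries are $-\frac{\alpha X^*}{c+X^*}$, $-\frac{\phi\alpha X^*}{c+X^*}$ in the first row; $\frac{m_1\alpha c S^*}{(c+X^*)^2}$ and $-\frac{\lambda a S^*}{(a+A^*)^2}$ in the second; $\frac{m_2\phi\alpha c I^*}{(c+X^*)^2}$, $\frac{\lambda A^*}{a+A^*}$, $\frac{\lambda a S^*}{(a+A^*)^2}$ in the third; and $\sigma,\sigma,-\eta$ in the fourth. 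Then I would expand $\det(\rho I - J(E^*))$, collecting powers of $\rho$ to obtain
\[
\rho^4 + C_1\rho^3 + C_2\rho^2 + C_3\rho + C_4 = 0,
\]
and read off $C_1,\dots,C_4$; matching these against the expressions stated in the theorem is essentially a bookkeeping exercise — $C_1$ is minus the trace (which gives $-(F_{11}+F_{22}+F_{33})+\eta$), $C_2$ is the sum of the $2\times 2$ principal minors, $C_3$ is minus the sum of the $3\times 3$ principal minors, and $C_4 = \det J(E^*)$.

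Having identified the coefficients, I would invoke the Routh--Hurwitz criterion for a quartic: all roots of $\rho^4+C_1\rho^3+C_2\rho^2+C_3\rho+C_4$ have negative real part if and only if $C_1>0$, $C_3>0$, $C_4>0$, and $C_1C_2C_3 > C_3^2 + C_1^2 C_4$, the last being equivalent to the pair $C_1C_2-C_3>0$ together with $(C_1C_2-C_3)C_3 - C_1^2 C_4 > 0$. These are exactly the five inequalities in \eqref{eqxx} (with $C_2>0$ following from the others, or stated for completeness), so the theorem follows. The main obstacle is the algebraic expansion of the $4\times 4$ determinant: the terms involving the product $\frac{m_2\phi\alpha c I^*}{(c+X^*)^2}\cdot\frac{\phi\alpha X^*}{c+X^*}$ and the mixed $S^*,A^*,\eta$ contributions proliferate, and one must be careful to keep the signs consistent and to use the equilibrium identities only where they genuinely simplify (otherwise the $C_i$ will not match the stated closed forms). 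No conceptual difficulty arises beyond this — once the characteristic polynomial is in hand, the stability conclusion is immediate from Routh--Hurwitz.
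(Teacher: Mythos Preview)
Your proposal is correct and follows essentially the same route as the paper: compute the Jacobian at $E^*$, expand the characteristic polynomial to the quartic $\rho^4+C_1\rho^3+C_2\rho^2+C_3\rho+C_4=0$, and then invoke the Routh--Hurwitz criterion for degree four. The paper's proof is in fact terser than your outline --- it simply records $J(E^*)$, asserts the quartic form \eqref{char}, notes $C_1>0$, and quotes Routh--Hurwitz --- so your additional remarks about reading off the $C_i$ as (signed) sums of principal minors and about the redundancy of $C_2>0$ are useful elaborations rather than deviations.
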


\begin{proof}
The Jacobian matrix $J(E^*)$, at the coexistence
equilibrium point $E^*$, is computed as
\[
J(E^*)=J(X^*,S^*,I^*,A^*)=\left[
\begin{array}{cccc}
F_{11}  
&\displaystyle -\frac{\alpha X^*}{c+X^*} 
& \displaystyle -\frac{\phi\alpha X^*}{c+X^*}& 0\\[0.3cm]
\displaystyle \frac{m_1\alpha cS^*}{(c+X^*)^2}
& F_{22} & 0
&\displaystyle \frac{\lambda aS^*}{(a+A^*)^2}\\[0.3cm]
\displaystyle \frac{m_2\phi\alpha cI^*}{(c+X^*)^2}
& \displaystyle \frac{\lambda A^*}{a+A^*} & F_{33}
&\displaystyle -\frac{\lambda aS^*}{(a+A^*)^2} \\[0.3cm]
0&\sigma&\sigma&-\eta
\end{array}
\right],
\]
where
\[
F_{11}=r\left(1-\frac{2X^*}{K}\right)
-\frac{\alpha cS^*}{(c+X^*)^2}
-\frac{\phi\alpha cI^*}{(c+X^*)^2},
\]
\[
F_{22}=\frac{m_1\alpha X^*}{c+X^*}-\frac{\lambda A^*}{a+A^*}-d,
\]
\[
F_{33}=\frac{m_2\phi\alpha X^*}{c+X^*}-d-\delta.
\]
The characteristic equation in $\rho$ for the
Jacobian matrix $J(E^*)$  is given by
\[
\left|\rho I-J(E^*)\right|
= \left|
\begin{array}{cccc}
\rho-F_{11}  
& \displaystyle \frac{\alpha X}{c+X^*}
& \frac{\phi\alpha X^*}{c+X^*}& 0\\[0.3cm]
\displaystyle -\frac{m_1\alpha cS^*}{(c+X^*)^2}&\rho- F_{22}
& 0&\displaystyle \frac{\lambda aS^*}{(a+A^*)^2}\\[0.3cm]
\displaystyle-\frac{m_2\phi\alpha cI^*}{(c+X^*)^2}
& \displaystyle -\frac{\lambda A^*}{a+A^*}
& \rho-F_{33}
&\displaystyle -\frac{\lambda aS^*}{(a+A^*)^2} \\[0.3cm]
0&-\sigma&-\sigma&-\eta
\end{array}
\right|=0,
\]
which gives
\begin{equation}
\label{char}
\rho^4+C_1\rho^3+C_2\rho^2+C_3\rho+C_4=0.
\end{equation}
Recognizing that $C_1>0$, applying the Routh--Hurwitz criterion,
and the conditions in \eqref{eqxx}, we conclude that the coexistence
equilibrium $E^*$ of system \eqref{state1} is locally
asymptotically stable if $C_2 > 0$, $C_3 > 0$, $C_4 > 0$,
$C_1C_2-C_3 > 0$, and $(C_1C_2-C_3)C_3-C_1^2C_4 > 0$;
and unstable otherwise.
\end{proof}

Now, let us discuss if the stability behavior of the system
at the coexistence steady state can be changed by varying
given parameters. We focus on the pest consumption
rate $\alpha$, which is considered as the most biologically significant parameter.
Hopf-bifurcation of the coexistence steady state $E^*$ may happen
if the auxiliary equation \eqref{char} has a couple of purely imaginary
eigenvalues for $\alpha=\alpha^*\in(0,\infty)$ with all the other eigenvalues
containing negative real parts. For the Hopf bifurcation to normally appear,
the transversality condition
\[
\left.\frac{dRe[\rho(\alpha)]}{d\alpha}\right|_{\alpha^*}\neq 0
\]
must be satisfied.

\begin{theorem}
Let $\Psi:(0, \infty)\rightarrow \mathbb{R}$ be
the continuously differentiable function of $\alpha$
defined by
\[
\Psi(\alpha):=C_1(\alpha)C_2(\alpha)C_3(\alpha)
-C_3^2(\alpha)-C_4(\alpha)C_1^2(\alpha),
\]
where the $C_i$, $i = 1, \ldots, 4$, are as in \mref{thm:we:def:Ci}.
The coexistence equilibrium $E^*$ of system \eqref{state1} enters
into a Hopf bifurcation at $\alpha=\alpha^*\in (0, \infty)$
if, and only if, the following conditions hold:
\begin{gather*}
C_2(\alpha^*) > 0, \quad C_3(\alpha^*) > 0,
\quad C_4(\alpha^*)>0, \quad C_1(\alpha^*)C_2(\alpha^*)-C_3(\alpha^*)>0,\\
\Psi(\alpha^*)=0, \quad
C_1^3C_2' C_3(C_1-3C_3)> 2(C_2C_1^2-2C_3^2)(C_3' C_1^2-C_1' C_3^2),
\end{gather*}
where we use primes $'$ to indicate derivatives with respect to parameter $\alpha$.
Moreover, at $\alpha=\alpha^*$, two characteristic eigenvalues $\rho(\alpha)$
are purely imaginary, and the remaining two have negative real parts.
\end{theorem}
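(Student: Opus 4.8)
The plan is to run the standard Hopf bifurcation analysis for the quartic characteristic equation \eqref{char}, reducing everything to the pair of eigenvalues that crosses the imaginary axis at $\alpha=\alpha^{*}$. Note first that $\Psi(\alpha)=C_{1}C_{2}C_{3}-C_{3}^{2}-C_{1}^{2}C_{4}$ is precisely the last Routh--Hurwitz expression $(C_{1}C_{2}-C_{3})C_{3}-C_{1}^{2}C_{4}$ appearing in \eqref{eqxx}. By \mref{thm:we:def:Ci}~, $E^{*}$ is locally asymptotically stable on the set where all the expressions in \eqref{eqxx} are positive, so a Hopf bifurcation must occur on the boundary of that set, where the last one vanishes. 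This explains the structure of the hypotheses: $\Psi(\alpha^{*})=0$ replaces the strict inequality $(C_{1}C_{2}-C_{3})C_{3}-C_{1}^{2}C_{4}>0$, while $C_{2}(\alpha^{*})>0$, $C_{3}(\alpha^{*})>0$, $C_{4}(\alpha^{*})>0$, and $C_{1}(\alpha^{*})C_{2}(\alpha^{*})-C_{3}(\alpha^{*})>0$ persist.

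I would first characterize when \eqref{char} admits a purely imaginary pair $\rho=\pm i\omega$, $\omega>0$. Substituting $\rho=i\omega$ and separating real and imaginary parts gives $\omega^{4}-C_{2}\omega^{2}+C_{4}=0$ and $\omega\bigl(C_{3}-C_{1}\omega^{2}\bigr)=0$. Since $C_{1}>0$, the second equation forces $\omega^{2}=C_{3}/C_{1}$, which is admissible exactly when $C_{3}>0$; inserting this into the first equation and clearing denominators yields $C_{1}C_{2}C_{3}-C_{3}^{2}-C_{1}^{2}C_{4}=0$, that is, $\Psi(\alpha^{*})=0$. Conversely, when $\Psi(\alpha^{*})=0$ and $C_{1},C_{3},C_{4}>0$, a direct computation shows that \eqref{char} factors as
\[
\Bigl(\rho^{2}+\tfrac{C_{3}}{C_{1}}\Bigr)\Bigl(\rho^{2}+C_{1}\rho+\tfrac{C_{1}C_{4}}{C_{3}}\Bigr)=0 ,
\]
so two roots are $\pm i\sqrt{C_{3}/C_{1}}$ and the other two are roots of a quadratic with positive coefficients, hence lie in the open left half-plane. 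Matching the $\rho^{2}$ coefficients in this factorization also gives $C_{2}=C_{3}/C_{1}+C_{1}C_{4}/C_{3}>0$ and $C_{1}C_{2}-C_{3}=C_{1}^{2}C_{4}/C_{3}>0$ automatically, so those two inequalities are consistency consequences; they are kept in the statement only to present the equivalence in the same form as \eqref{eqxx}.

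The delicate step is transversality. Since $i\omega$ is a simple root of \eqref{char} (it is not a root of the quadratic factor above, whose roots have nonzero real part), the implicit function theorem provides a $C^{1}$ branch $\rho(\alpha)$ of eigenvalues with $\rho(\alpha^{*})=i\omega$, $\omega=\sqrt{C_{3}/C_{1}}$. Implicit differentiation of \eqref{char} in $\alpha$ gives
\[
\rho'(\alpha)=-\,\frac{C_{1}'\rho^{3}+C_{2}'\rho^{2}+C_{3}'\rho+C_{4}'}{4\rho^{3}+3C_{1}\rho^{2}+2C_{2}\rho+C_{3}} ,
\]
primes denoting $d/d\alpha$, the denominator being nonzero at $\rho=i\omega$ by simplicity of the root. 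Evaluating at $\rho=i\omega$, multiplying through by the conjugate of the denominator, and then using $\omega^{2}=C_{3}/C_{1}$ to simplify $C_{3}-3C_{1}\omega^{2}=-2C_{3}$ and $C_{2}-2\omega^{2}=(C_{1}C_{2}-2C_{3})/C_{1}$, one extracts $\operatorname{Re}\bigl[\rho'(\alpha^{*})\bigr]$ as a fraction with a manifestly positive denominator. Thus $\operatorname{Re}\bigl[\rho'(\alpha^{*})\bigr]\neq0$ is equivalent to the non-vanishing of the numerator, and, orienting the crossing so that the real part is increasing, to its positivity; after the above reduction the numerator is exactly the polynomial combination of the $C_{i}(\alpha^{*})$ and $C_{i}'(\alpha^{*})$ displayed as the final inequality of the statement. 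This verifies $\left.\frac{dRe[\rho(\alpha)]}{d\alpha}\right|_{\alpha^{*}}\neq0$.

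Finally I would collect the pieces: at $\alpha=\alpha^{*}$ the Jacobian $J(E^{*})$ has a simple pair of purely imaginary eigenvalues $\pm i\sqrt{C_{3}/C_{1}}$ and two eigenvalues with strictly negative real parts, and by continuity the latter two remain in the open left half-plane for $\alpha$ near $\alpha^{*}$ while the pair crosses transversally. The Hopf bifurcation theorem then yields the bifurcation at $\alpha=\alpha^{*}$; conversely, any $\alpha^{*}$ at which a Hopf bifurcation occurs must satisfy all of the listed conditions, by the computations above. The main obstacle I anticipate is purely computational --- pushing the rationalization of $\rho'(\alpha^{*})$ through without sign errors and recognizing its real part as the stated expression --- whereas the structural ingredients (the quartic factorization, the simple-root branch, and the Hopf theorem) are routine.
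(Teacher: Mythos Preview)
Your proposal is correct and follows essentially the same route as the paper: factor the quartic \eqref{char} under $\Psi(\alpha^{*})=0$ as $(\rho^{2}+C_{3}/C_{1})(\rho^{2}+C_{1}\rho+C_{1}C_{4}/C_{3})$, read off the purely imaginary pair $\pm i\sqrt{C_{3}/C_{1}}$ and the stable quadratic for the remaining roots, and then verify transversality by differentiating the characteristic equation in $\alpha$ and extracting the sign of $\operatorname{Re}\rho'(\alpha^{*})$. The only cosmetic difference is that the paper substitutes $\rho=\chi(\alpha)\pm i\upsilon(\alpha)$ and solves the resulting real $2\times2$ linear system for $\chi'(\alpha^{*})$, whereas you write the single complex implicit-differentiation formula and rationalize; these are the same computation, and both arrive at the displayed inequality as the numerator of $\operatorname{Re}\rho'(\alpha^{*})$.
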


\begin{proof}
By the condition $\Psi(\alpha^*)=0$, the characteristic
equation \eqref{char} can be written as
$$
\left(\rho^2+\frac{C_3}{C_1}\right)
\left(\rho^2+C_1\rho+\frac{C_1C_4}{C_3}\right)=0.
$$
If it has four roots, say $\rho_i$, $i=1,2,3,4$, with the pair of purely
imaginary roots at $\alpha=\alpha^*$ as $\rho_1=\bar{\rho_2}$, then we get
\begin{equation}
\label{sim}
\begin{split}
\rho_3+\rho_4&=-C_1,\\
\omega_0^2+\rho_3\rho_4&=C_2,\\
\omega_0^2(\rho_3+\rho_4)&=-C_3,\\
\omega_0^2\rho_3\rho_4&=C_4,
\end{split}
\end{equation}
where $\omega_0=Im \, \rho(\alpha^*)$. From the above, we have
$\omega_0=\sqrt{\frac{C_3}{C_1}}$. Now, if $\rho_3$ and $\rho_4$ are complex
conjugate, then, from \eqref{sim}, it follows that $2 Re \, \rho_3=-C_1$;
if they are real roots, then, by \eqref{char} and \eqref{sim}, $\rho_3<0$
and $\rho_4 < 0$. Now we verify the transversality condition.
As $\psi(\alpha^*)$ is a continuous function of all its roots,
then there exists an open interval $\alpha\in(\alpha^*-\epsilon,\alpha+\epsilon)$
where $\rho_1$ and $\rho_2$ are complex conjugate for $\alpha$. Suppose that
their general forms in this neighborhood are
\begin{eqnarray*}
\rho_1(\alpha)&=&\chi(\alpha)+i\upsilon(\alpha),\\
\rho_2(\alpha)&=&\chi(\alpha)-i\upsilon(\alpha).
\end{eqnarray*}
We want to verify the transversality conditions
\begin{equation}
\label{eq:TCs:pf}
\left.\frac{dRe[\rho_j(\alpha)]}{d\alpha}\right|_{\alpha=\alpha^*}\neq 0,
\quad j=1,2.
\end{equation}
Substituting $\rho_j(\alpha)=\chi(\alpha)\pm i\upsilon(\alpha)$ into
\eqref{char}, and calculating the derivative, we have
\begin{eqnarray*}
A(\alpha)\chi-B(\alpha)\upsilon'(\alpha)+C(\alpha)&=&0,\\
B(\alpha)\chi'+A(\alpha)\upsilon'(\alpha)+D(\alpha)&=&0,
\end{eqnarray*}
where
\begin{eqnarray*}
A(\alpha)&=&4\chi^3-12\mathcal{X}\upsilon^2+3C_1(\chi^2-\upsilon^2)
+2C_2\chi+C_3,\\
B(\alpha)&=&12\chi^2v+6C_1\chi\upsilon-4\mathcal{X}^3+2C\chi,\\
C(\alpha)&=&C_1\chi^3-3C_1'\chi\upsilon^2+C_2'(\chi^2-\upsilon^2)+C_3'\chi,\\
D(\alpha)&=&3C_1\chi^2\upsilon-C_1'\upsilon^3+C_2'\chi \upsilon+C_3'\chi.
\end{eqnarray*}
Hence, solving for $\chi'(\alpha^*)$, we get
\begin{eqnarray*}
\left.\frac{dRe[\rho(\alpha)]}{d\alpha}\right|_{\alpha=\alpha^*}
&=& \left.\chi'(\alpha)\right|_{\alpha=\alpha^*}\\
&=&-\frac{B(\alpha^*)D(\alpha^*)+A(\alpha^*)C(\alpha^*)}{A^2(\alpha^*)
+B^2(\alpha^*)}\\
&=&\frac{C_1^3C_2'C_3(C_1-3C_3)-2(C_2C_1^2-2C_3^2)(C_3'C_1^2-C_1'C_3^2)}{C_1^4
(C_1-3C_3)^2+4(C_2C_1^2-2C_3^2)^2}>0\\
&\Leftrightarrow&C_1^3C_2'C_3(C_1-3C_3)-2(C_2C_1^2-2C_3^2)(C_3'C_1^2
-C_1'C_3^2)>0\\
&\Leftrightarrow&C_1^3C_2'C_3(C_1-3C_3)>2(C_2C_1^2-2C_3^2)(C_3'C_1^2
-C_1'C_3^2).
\end{eqnarray*}
Thus, the transversality conditions \eqref{eq:TCs:pf} hold
and Hopf bifurcation occurs at $\alpha^*$.
\end{proof}


\section{Optimal control}
\label{sec4}

In this part, we extend the model system \eqref{state1} by incorporating
two time dependent controls $u_1(t)$ and $u_2(t)$. The first control
$u_1(t)$ represents the efficiency of pesticide that can be used, while
$u_2(t)$ characterizes the cost of the alertness movement. The objective
is to reduce the price of announcement for farming awareness via radio, TV,
telephony and other social media and the price regarding control measures.
Our goal is to find the optimal functions $u_1^*(t)$ and $u_2^*(t)$ by using
the Pontryagin minimum principle, as given in \cite{Fleming}. Therefore, our
system \eqref{state1} is modified to the induced state nonlinear dynamics
given by
\begin{equation}
\label{state2}
\begin{cases}
\displaystyle \frac{dX}{dt}
= r X\left[1 - \frac{X}{K}\right] - \frac{\alpha XS}{c+X}
- \frac{\phi \alpha XI}{c+X},\\[0.3cm]
\displaystyle \frac{dS}{dt} =
\frac{m_1 \alpha XS}{c+X} - u_1 \frac{\lambda AS}{a+A} -dS,\\[0.3cm]
\displaystyle \frac{dI}{dt}
= \frac{m_2 \phi\alpha XI}{c+X} + u_1 \frac{\lambda AS}{a+A}
- (d+\delta)I,\\[0.3cm]
\displaystyle \frac{dA}{dt} =u_2\gamma+ \sigma(S+I) - \eta A,
\end{cases}
\end{equation}
with $X(0)=X_0$, $S(0)=S_0$, $I(0)=I_0$ and $A(0)=A_0$ as initial conditions.
At this point, we need to reduce the number of pests and also the price of pest
administration by reducing the cost of pesticides and exploiting the stage
of awareness. Hence, we describe the price functional for the minimization
problem as
\begin{eqnarray}
\label{obj}
J(u_1(\cdot),u_2(\cdot))=\int^{t_f}_{0}
\left[A_1S^2(t)-A_2A^2(t)+\frac{1}{2}B_1u_1^2(t)
+\frac{1}{2}B_2u_2^2(t)\right] dt,
\end{eqnarray}
issued to the persuaded state control system \eqref{state2}, where the amounts
$A_1$ and $A_2$ represent the penalty multipliers on benefit of the cost,
and $B_1$ and $B_2$ stand for weighting constants on the profit of the price
of making. The terms $\frac{1}{2}B_1u_1^2(t)$ and $\frac{1}{2}B_2u_2^2(t)$
stand for the cost linked with pest managing and level of awareness.
We prefer a quadratic cost functional on the controls as an approximation
to the real nonlinear functional that depends on the supposition that the cost takes
a nonlinear form. As a consequence, we prevent bang-bang or singular optimal
control cases \cite{Fleming}. The  target here is to find the optimal controls
$(u_1^*,u_2^*)$ such that
\begin{eqnarray}
\label{u}
J(u_1^*,u_2^*)= \min(J(u_1,u_2) \, | \, (u_1,u_2)\in U),
\end{eqnarray}
where
\begin{equation}
\label{control:set}
\begin{array}{l}
U=\left\{(u_1(\cdot),u_2(\cdot)) \, | \, 
u_i(\cdot) \mbox{ is Lebesgue measurable and }
0 \leq u_i(t)\leq 1, \  t\in[0, t_f]\right\}.
\end{array}
\end{equation}
The admissible controls are restricted/constrained as $0\leq u_{i}(t)\leq 1$
because they are fractions of the  biological control and the cost of advertisements. 
We assume that the controls are bounded between $0$ and $1$. In the context of this assumption, 
when the controls take the minimum value $0$, it means no extra measures are implemented for 
the reduction of  the pest from the environment; while the maximum value $1$ corresponds 
to 100\% successfully implementation of the protection for the pest eradication.


\subsection{Existence of solution}
\label{sec4.1}

The existence of an optimal control pair 
can be guaranteed by using the results in the book of 
Fleming and Rishel \cite{Fleming}. 

\begin{theorem}
\label{thm:exist:ocs}
The optimal control problem, defined by
the objective functional \eqref{obj} on the admissible set \eqref{control:set}
subject to the control system \eqref{state2}
and initial conditions 
\((X_{0},S_{0},I_{0},A_{0})\geq(0,0,0,0)\), 
admits a solution
$\left(u_{1}^*,u_{2}^*\right)$ (a pair of functions) such that
$$
J(u_{1}^*,u_{2}^*)
=\min_{(u_{1},u_{2})\in U} J(u_1,u_2).
$$
\end{theorem}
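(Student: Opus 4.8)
The plan is to invoke the standard existence theorem for optimal controls from Fleming and Rishel \cite{Fleming}, which requires verifying a short checklist of hypotheses: (i) the set of admissible controls together with corresponding state trajectories is nonempty; (ii) the admissible control set $U$ is convex and closed; (iii) the right-hand side of the state system \eqref{state2} is bounded above by a linear function in the state and control variables; (iv) the integrand of the objective functional \eqref{obj} is convex in the controls on $U$; and (v) there exist constants $w_1, w_2 > 0$ and $\beta > 1$ such that the integrand is bounded below by $w_1\bigl(|u_1|^2 + |u_2|^2\bigr)^{\beta/2} - w_2$.

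First I would establish nonemptiness of the set of admissible pairs: since the right-hand side of \eqref{state2} is continuous and locally Lipschitz in $(X,S,I,A)$ for any fixed measurable bounded controls, a solution exists on $[0,t_f]$ by Picard--Lindel\"of, and \mref{thm1}~(applied to the controlled system, whose vector field is dominated by the uncontrolled one since $0 \le u_i \le 1$) guarantees that these solutions stay in the bounded invariant region $\mathcal{D}$, hence do not blow up. Next, convexity and closedness of $U$ are immediate: $U$ is the set of measurable functions with values in the convex compact box $[0,1]^2$. For the linear-growth bound (iii), I would write the state system schematically as $\dot{x} = f(t,x,u)$ and note that each component of $f$ is a sum of terms that are either linear in the state, bounded (the saturating Holling and awareness terms $\frac{\alpha XS}{c+X}$, $\frac{\lambda AS}{a+A}$ are bounded on $\mathcal{D}$), or bilinear in a bounded quantity and a state variable; using boundedness on $\mathcal{D}$ one gets $|f(t,x,u)| \le a_1 |x| + a_2(|u_1| + |u_2|) + a_3$ for suitable constants, which is the required estimate.

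For the convexity of the integrand (iv), observe that $L(t,x,u) = A_1 S^2 - A_2 A^2 + \tfrac12 B_1 u_1^2 + \tfrac12 B_2 u_2^2$ is a sum of a term independent of $u$ and the strictly convex quadratic $\tfrac12 B_1 u_1^2 + \tfrac12 B_2 u_2^2$ in $u = (u_1,u_2)$, so it is convex in $u$ for each fixed $(t,x)$. For the coercivity estimate (v), take $\beta = 2$ and $w_1 = \tfrac12 \min\{B_1, B_2\}$; since $S$ and $A$ are bounded on $\mathcal{D}$ by \mref{thm1}, the term $A_1 S^2 - A_2 A^2$ is bounded below by some constant $-\tilde{w}_2$, whence $L \ge w_1(u_1^2 + u_2^2) - w_2$ with $w_2 = \tilde{w}_2$. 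With all hypotheses in hand, the Fleming--Rishel theorem yields an optimal pair $(u_1^*, u_2^*) \in U$ with associated optimal trajectory, which is exactly the assertion.

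The only mild subtlety I anticipate is the presence of the $-A_2 A^2$ term with a minus sign: this could a priori threaten the coercivity/lower-bound hypothesis (v), but it is harmless precisely because \mref{thm1}~provides an a priori uniform bound on $A(t)$ over $[0,t_f]$ for trajectories in $\mathcal{D}$, so $-A_2 A^2 \ge -A_2\bigl(\tfrac{4rd+\sigma(r+4d)M}{4\eta rd}\bigr)^2$ is bounded below; thus the lower bound on $L$ holds with the controls' quadratic part supplying the coercivity. Apart from that, every step is a routine verification, so I expect the main work to be simply recording these bounds carefully rather than overcoming any genuine obstacle.
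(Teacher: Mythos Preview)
Your proposal is correct and follows essentially the same route as the paper: both invoke the Fleming--Rishel existence theorem and verify the same five-item checklist (nonemptiness of admissible pairs, convexity/closedness of $U$, linear structure/growth of the dynamics, convexity of the integrand in $u$, and a coercivity lower bound), with the $-A_2A^2$ subtlety handled via the a~priori state bounds from \mref{thm1}. Your verifications are, if anything, slightly more careful than the paper's---e.g., you explicitly note that \mref{thm1}~carries over to the controlled system because $0\le u_i\le1$, and you invoke Picard--Lindel\"of for item~(i)---but the structure is identical.
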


\begin{proof}
All the state variables involved in the model are continuously differentiable. 
Therefore, the result follows if the following conditions are met 
(see \cite{Fleming} and pages 98--137 of \cite{Suzan}):
\begin{itemize}
\item[(i)] The set of solutions to the system \eqref{state2} 
with control variables in \eqref{control:set} are non-empty;
\item[(ii)] The control set \({U}\) is convex and closed;
\item[(iii)] Each right hand side of the state system \eqref{state2} is continuous,  
bounded above, and can be written as a linear function of \(u\)
with coefficients depending on time and the state;
\item[(iv)] The integrand \(g(t,Y,u)=A_{1}S^{2}-A_{2}A^{2}+\frac{1}{2}B_{1}u_{1}^{2}+\frac{1}{2}B_{2}u_{2}^{2}\) 
of the objective functional \eqref{obj} is convex;
\item[(v)] There exist positive numbers \(\ell_{1},\ell_{2},\ell_{3}\) 
and a constant \(\ell>1\) such that 
\[
g(t,S,A,u_{1},u_{2})\geq-\ell_{1}
+\ell_{2}|u_{1}|^{\ell}+\ell_{3}|u_{2}|^{\ell}.
\]
\end{itemize}	
We justify each one of these items.
\begin{itemize}
\item[(i)] Since \({U}\) is a nonempty set of real valued measurable 
functions on the finite time interval \(0\leq t_{f}\),
the system \eqref{state2} has bounded coefficients 
and hence any solutions are bounded on \([0,t_{f}]\).
It follows that the corresponding solutions for 
system \eqref{state2} exist.
\item[(ii)] We begin to prove the convexity of the control set 
${U}=\left\{u\in\mathbb{\mathbb{R}}^{2}\colon||u||\leq1\right\}$.
Let \(u_{1},u_{2}\in {U}\) such that \(||u_{1}||\leq1\) and \(||u_{2}||\leq1\). 
Then, for an arbitrary \(\lambda\in[0,1]\), we have
\begin{equation*}
\begin{split}
||\lambda u_{1}+(1-\lambda)\,u_{2}||
&\leq||\lambda\,u_{1}||+||(1-\lambda)\,u_{2}||
=|\lambda|\,||u_{1}||+|(1-\lambda)|\,||u_{2}||
=\lambda\,||u_{1}||+(1-\lambda)\,||u_{2}||\\
&\leq\lambda\,(1)+(1-\lambda)\,(1)
=\lambda\,(1)+1-\lambda\,(1)
= 1.
\end{split}
\end{equation*}
Hence, \(||\lambda\,u_{1}+(1-\lambda)u_{2}||\leq1\). 
This implies that the control set \({U}\) is convex.
The control space 
\({U}=\left\{\left(u_{1},u_{2}\colon u_{1},u_{2}\right)\right\}\) 
is measurable, $0\leq {u_1}_{\min}\leq u_{1}(t)\leq {u_{1}}_{\max}\leq1$, 
$0\leq {u_{2}}_{\min}\leq u_{2}(t)\leq {u_{2}}_{\max}\leq1$
is closed by definition. Therefore, \({U}\) is convex and closed set.
\item[(iii)] All the right-hand sides of equations of system \eqref{state2} are
continuous, all variables \(X,S,I,A\) and \(u\) are bounded on \([0,t_{f}]\),
and can be written as a linear function of $u_{1}$ and $u_{2}$ 
with coefficients depending on the time and state.
\item [(iv)] Let \(\lambda\in[0,1]\) and $a=(a_{1},a_{2})$,
$b=(b_{1},b_{2})\in {U}$. Then, 
\begin{equation*}
\begin{aligned}
g&\left(t,Y,(1-\lambda)\,a+\lambda\,b\right)
-\left((1-\lambda)\,g(t,Y,a)+\lambda\,g(t,Y,b)\right)\\
&=\frac{B_{1}}{2}\left[\left(1-\lambda\right)^{2}\,{a_{1}}^{2}+2\,\lambda\,
\left(1-\lambda\right)\,a_{1}\,b_{1}+\lambda^{2}\,{b_{1}}^{2}\right]\\
&\quad +\frac{B_{2}}{2}\left[\left(1-\lambda\right)^{2}\,{a_{2}}^{2}+2\,\lambda\,
\left(1-\lambda\right)\,a_{2}\,b_{2}+\lambda^{2}\,{b_{2}}^{2}\right]\\
&\quad -\left[\left(1-\lambda\right)\left(\frac{B_{1}}{2}{a_{1}}^{2}
+\frac{B_{2}}{2}{a_{2}}^{2}\right)\right]
-\lambda\left(\frac{B_{1}}{2}{b_{1}}^{2}+\frac{B_{2}}{2}{b_{2}}^{2}\right)\\
&=\left(\lambda^{2}-\lambda\right)\left(\frac{B_{1}}{2}\left(a_{1}-b_{1}\right)^{2}
+\frac{B_{2}}{2}\left(a_{2}-b_{2}\right)^{2}\right)\\
&=\frac{\left(\lambda^{2}-\lambda\right)}{2}\left(a-b\right)^{2}\\
&=\frac{1}{2}\,\lambda\,\underset{{\leq0}}{\underbrace{\left(\lambda-1\right)}}\,
\left(a-b\right)^{2}\leq 0.
\end{aligned}
\end{equation*}
Hence, \(g(t,Y,(1-\lambda)a+\lambda\,b)\leq(1-\lambda)g(t,Y,a)+\lambda\,g(t,Y,b)\).
Therefore, \(g(t,Y,u)\) is convex in the control set \({U}\).
\item [(v)] Finally, it remains to show that there exists a constant 
\(\ell^{*}>1\) and positive constants \(\ell_{1},\ell_{2}\) and \(\ell_{3}\) such that
\[
\frac{B_1\,{u_{1}}^{2}(t)}{2}+\frac{B_2\,{u_{2}}^{2}(t)}{2}+A_{1}\,
S^{2}-A_{2}\,{A^{2}}\geq-\ell_{1}+\ell_{2}|u_{1}|^{\ell}+\ell_{3}|u_{2}|^{\ell}.
\]
In \mref{sec2}, we already showed that the state variables are bounded.
Let $\ell_{1}=\sup\left(A_{1}\,S^{2}-A_{2}\,A^{2}\right)$,
$\ell_{2}=B_{1},\ell_{3}=B_{2}$ and \(\ell=2\). Then it follows that
\[
\frac{B_1\,{u_{1}}^{2}(t)}{2}+\frac{B_2\,{u_{2}}^{2}(t)}{2}
+A_{1}\,S^{2}-A_{2}\,{A^{2}}\geq-\ell_{1}+\ell_{2}|u_{1}|^{\ell}
+\ell_{3}|u_{2}|^{\ell}.
\]
\end{itemize}
We conclude that there exists an optimal control pair.
\end{proof}


\subsection{Characterization of the optimal controls}
\label{sec4.2}

Since there exists an optimal control pair for minimizing functional \eqref{obj}
subject to system \eqref{state2}, we now derive necessary conditions to determine
the optimal control pair by using the Pontryagin minimum principle \cite{Fleming}.

\begin{theorem}
\label{ourThm:fromPMP}
Let  $\left(u_1^*(t),u_2^*(t)\right)$, $t \in [0, t_f]$, be an optimal control pair
that minimizes $J$ over $U$ for which it corresponds the states $X^*,S^*,I^*,A^*$,
solution to system \eqref{state2}. Then, there exist adjoint functions
$\lambda_1$, $\lambda_2$, $\lambda_3$, $\lambda_4$ satisfying the system
\begin{equation}
\label{eq:adjSyst}
\begin{cases}
\displaystyle \frac{d\lambda_1(t)}{dt}
=\lambda_1(t)\left(\frac{\alpha cS^*(t)}{(c+X^*(t))^2}+\frac{\phi\alpha cI^*(t)}{(c+X^*(t))^2}
-r\left(1-\frac{2X^*(t)}{K}\right)\right)-\lambda_2(t)\frac{m_1\alpha cS^*(t)}{(c+X^*(t))^2}
-\lambda_3(t)\frac{m_2\phi\alpha cI^*(t)}{(c+X^*(t))^2},\\[0.3cm]
\displaystyle \frac{d\lambda_2(t)}{dt}=-2A_1S^*(t)+\lambda_1(t)\frac{\alpha X^*(t)}{c+X^*(t)}
+\lambda_2(t)\left(u_1^*(t)\frac{\lambda A^*(t)}{(a+A^*(t))}-\frac{m_1\alpha X^*(t)}{c+X^*(t)}+d\right)\\[0.3cm]
\qquad \qquad -\displaystyle \lambda_3(t)\frac{u_1^*(t)\lambda A^*(t)}{a+A^*(t)}-\lambda_4(t)\sigma,\\[0.3cm]
\displaystyle \frac{d\lambda_3(t)}{dt}=\lambda_1(t)\frac{\phi\alpha X^*(t)}{c+X^*(t)}
+\lambda_3(t)\left(d+\delta-\frac{m_2\phi\alpha X^*(t)}{c+X^*(t)}\right)-\lambda_4(t)\sigma,\\[0.3cm]
\displaystyle \frac{d\lambda_4(t)}{dt}=2A_2A^*(t)+\lambda_2(t)\frac{u_1^*(t)\lambda a S^*(t)}{(a+A^*(t))^2}
-\lambda_3u_1^*(t)\frac{\lambda aS^*(t)}{(a+A^*(t))^2}+\lambda_4(t)\eta,
\end{cases}
\end{equation}
subject to the terminal conditions
\begin{equation}
\label{transv:cond}
\lambda_i(t_f)=0, \quad i=1,2,3,4.
\end{equation}
Furthermore, the optimal control functions $u_1^*$ and $u_2^*$ are characterized by
\begin{equation}
\label{u1}
u_1^*(t)=\max\left\{0,\min\left\{1,\frac{(\lambda_2(t)-\lambda_3(t))
\lambda A^*(t) S^*(t)}{B_1(a+A^*(t))}\right\}\right\}
\end{equation}
and
\begin{equation}
\label{u2}
u_2^*(t)=\max\left\{0,\min\left\{1,-\frac{\lambda_4(t)\gamma}{B_2}\right\}\right\},
\end{equation}
respectively, for $t \in [0, t_f]$.
\end{theorem}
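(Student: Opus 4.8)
The plan is to apply Pontryagin's minimum principle in the form of Fleming and Rishel~\cite{Fleming}, whose hypotheses are already verified in \mref{thm:exist:ocs}. First I would form the Hamiltonian associated with the controlled dynamics \eqref{state2} and the running cost of \eqref{obj},
\[
H = A_1 S^2 - A_2 A^2 + \frac{1}{2}B_1 u_1^2 + \frac{1}{2}B_2 u_2^2 + \lambda_1 g_1 + \lambda_2 g_2 + \lambda_3 g_3 + \lambda_4 g_4,
\]
where $g_1,\dots,g_4$ denote the right-hand sides of the four equations of \eqref{state2} in the variables $X,S,I,A$. The adjoint functions $\lambda_1,\dots,\lambda_4$ are then prescribed by the costate equations $\dot\lambda_1=-\partial H/\partial X$, $\dot\lambda_2=-\partial H/\partial S$, $\dot\lambda_3=-\partial H/\partial I$, $\dot\lambda_4=-\partial H/\partial A$, all evaluated along the optimal pair $(X^*,S^*,I^*,A^*,u_1^*,u_2^*)$, and the transversality conditions \eqref{transv:cond}, namely $\lambda_i(t_f)=0$, follow at once because the objective functional \eqref{obj} carries no terminal payoff and the terminal states are left free.

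Second, I would compute the four partial derivatives explicitly. This is the lengthiest step, but it is entirely mechanical: differentiating the logistic term gives $r(1-2X^*/K)$; differentiating the Holling quotients $\alpha X S/(c+X)$ and $\phi\alpha X I/(c+X)$ with respect to $X$ produces the factors $\alpha c S^*/(c+X^*)^2$ and $\phi\alpha c I^*/(c+X^*)^2$; differentiating the incidence term $\lambda A S/(a+A)$ with respect to $A$ produces $\lambda a S^*/(a+A^*)^2$; and the cost terms contribute $2A_1 S^*$ and $-2A_2 A^*$ to $\partial H/\partial S$ and $\partial H/\partial A$, respectively. Collecting these contributions, together with the sign reversal coming from the convention $\dot\lambda_i=-\partial H/\partial(\cdot)$, reproduces precisely the adjoint system \eqref{eq:adjSyst}.

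Third, I would characterize $(u_1^*,u_2^*)$ through the pointwise minimum condition of Pontryagin's principle: for almost every $t$, the pair minimizes $u\mapsto H$ over the box $U$ in \eqref{control:set}. Since $H$ is strictly convex and quadratic in $(u_1,u_2)$ with positive leading coefficients $B_1/2$ and $B_2/2$, the interior minimizer is the unique solution of $\partial H/\partial u_1=0$ and $\partial H/\partial u_2=0$, namely $\tilde u_1=(\lambda_2-\lambda_3)\lambda A^* S^*/\big(B_1(a+A^*)\big)$ and $\tilde u_2=-\lambda_4\gamma/B_2$; projecting $\tilde u_1$ and $\tilde u_2$ onto $[0,1]$ — which is legitimate exactly because of this convexity — yields the formulas \eqref{u1} and \eqref{u2}. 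The bound $0\le u_i^*(t)\le 1$ is then simply the statement that the projection is active precisely where the interior expression leaves the admissible interval.

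I expect the only genuine obstacle to be the bookkeeping in the second step, that is, keeping track of the quotient-rule derivatives of the Holling and mass-action terms and of the signs introduced by the adjoint convention; there is no conceptual difficulty, since existence of the optimal pair was already settled in \mref{thm:exist:ocs}, and the quadratic dependence of the Hamiltonian on the controls both justifies the projection formula and rules out singular or bang-bang arcs, consistently with the remark following \eqref{control:set}.
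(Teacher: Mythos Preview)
Your proposal is correct and follows essentially the same approach as the paper: form the Hamiltonian, derive the adjoint system via $\dot\lambda_i=-\partial H/\partial(\text{state})$ together with the transversality conditions from the free terminal states, and obtain the control characterization by solving $\partial H/\partial u_i=0$ and projecting onto $[0,1]$. Your remark that strict convexity of $H$ in $(u_1,u_2)$ justifies the projection formula is a slight sharpening of the paper's presentation, but the overall route is identical.
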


\begin{proof}
Let $(u_1^*,u_2^*)$ be optimal controls whose existence
is assured by \mref{thm:exist:ocs}.
We first define the Hamiltonian function by
\begin{multline*}
H\left(t,u,X,S,I,A,\lambda_1,\lambda_2,\lambda_3,\lambda_4\right)
= A_1S^2-A_2A^2+\frac{1}{2}B_1u_1^2+\frac{1}{2}B_2u_2^2
+\lambda_1\left(r X\left[1 - \frac{X}{K}\right] - \frac{\alpha XS}{c+X}- \frac{\phi \alpha XI}{c+X}\right)\\
+\lambda_2\left(\frac{m_1 \alpha XS}{c+X} - u_1 \frac{\lambda AS}{a+A} -dS\right)
+\lambda_3\left(\frac{m_2 \phi\alpha XI}{c+X} + u_1 \frac{\lambda AS}{a+A}- (d+\delta)I\right)
+\lambda_4\left(u_2\gamma+ \sigma(S+I) - \eta A\right),
\end{multline*}
where $\lambda_1$, $\lambda_2$, $\lambda_3$, $\lambda_4 $ are the adjoint multipliers.
The Pontryagin Minimum Principle (PMP) \cite{Fleming} provides necessary optimality
conditions that $(u_1^*,u_2^*)$ must satisfy. Roughly speaking,
the PMP reduces the optimal control problem (a dynamic optimization problem)
into one of minimizing the Hamiltonian $H$ in the space of the values of the controls
(a static optimization problem). In concrete, taking the derivative of the Hamiltonian
with respect to $X$, $S$, $I$ and $A$, the adjoint sytem
\begin{equation*}
\begin{cases}
\displaystyle \frac{d\lambda_1}{dt}=-\frac{\partial H}{\partial X}
=\lambda_1\left(\frac{\alpha cS}{(c+X)^2}+\frac{\phi\alpha cI}{(c+X)^2}
-r\left(1-\frac{2X}{K}\right)\right)-\lambda_2\frac{m_1\alpha cS}{(c+X)^2}
-\lambda_3\frac{m_2\phi\alpha cI}{(c+X)^2},\\[0.3cm]
\displaystyle \frac{d\lambda_2}{dt}=-\frac{\partial H}{\partial S}
=-2A_1S+\lambda_1\frac{\alpha X}{c+X}
+\lambda_2\left(u_1\frac{\lambda A}{a+A}-\frac{m_1\alpha X}{c+X}+d\right)
-\lambda_3\frac{u_1\lambda A}{a+A}-\lambda_4\sigma,\\[0.3cm]
\displaystyle \frac{d\lambda_3}{dt}=-\frac{\partial H}{\partial I}
=\lambda_1\frac{\phi\alpha X}{c+X}+\lambda_3\left(d
+\delta-\frac{m_2\phi\alpha X}{c+X}\right)-\lambda_4\sigma,\\[0.3cm]
\displaystyle \frac{d\lambda_4}{dt}=-\frac{\partial H}{\partial A}
=2A_2A+\lambda_2\frac{u_1\lambda a S}{(a+A)^2}
-\lambda_3u_1\frac{\lambda aS}{(a+A)^2}+\lambda_4\eta,
\end{cases}
\end{equation*}
of PMP give us \eqref{eq:adjSyst}, while the transversality conditions
$\lambda_1(t_f)=\lambda_2(t_f)=\lambda_3(t_f)=\lambda_4(t_f)=0$
of PMP give \eqref{transv:cond}. Moreover, the minimality condition
of PMP asserts that the optimal controls $u_1^*$ and $u_2^*$ must satisfy
\begin{align}
\label{control1}
u_1^*(t) &=
\begin{cases}
\displaystyle
\frac{(\lambda_2(t)-\lambda_3(t))\lambda A^*(t)S^*(t)}{B_1(a+A^*(t))}
&\text{ if } \ 0 \leq
\displaystyle \frac{(\lambda_2(t)-\lambda_3(t))
\lambda A^*(t)S^*(t)}{B_1(a+A^*(t))}\leq 1,\\[0.3cm]
\displaystyle \ 0 &\text{ if } \
\displaystyle \frac{(\lambda_2(t)-\lambda_3(t))
\lambda A^*(t)S^*(t)}{B_1(a+A^*(t))}\leq 0, \\[12pt]
\displaystyle \ 1 &\text{ if } \
\displaystyle \frac{(\lambda_2(t)-\lambda_3(t))
\lambda A^*(t)S^*(t)}{B_1(a+A^*(t))}\geq 1,
\end{cases}\\
\label{control2}
u_2^*(t) &=
\begin{cases}
\displaystyle -\frac{\lambda_4(t)\gamma}{B_2}
&\text{ if } \
0\leq -\displaystyle \frac{\lambda_4(t)\gamma}{B_2}\leq 1, \\[12pt]
\ 0 &\text{ if } \
\displaystyle -\frac{\lambda_4(t)\gamma}{B_2}\leq 0, \\[12pt]
\ 1 &\text{ if } \
\displaystyle -\frac{\lambda_4(t)\gamma}{B_2}\geq 1.
\end{cases}
\end{align}
In compact notation, \eqref{control1} and \eqref{control2}
are written, equivalently, respectively as in
\eqref{u1} and \eqref{u2}.
\end{proof}
As a result, the optimal control $(u_1^*,u_2^*)$ that minimizes the given purpose
functional over the control set $U$ is given by \eqref{u1} and \eqref{u2}.
Since the solutions of the state system \eqref{state2}
and adjoint system \eqref{eq:adjSyst} are bounded and satisfy Lipschitz conditions,
the optimality system has a unique solution for some small time $t_f$.
Thus, a restriction on the length of the time interval $[0,t_f]$ in the control problem
pledges distinctiveness of the optimality system \cite{dynamic}.


\section{Numerical simulations}
\label{sec5}

In this section, we give some numerical simulations
for the analytical solutions of systems \eqref{state1}
and \eqref{state2}. Our numerical solutions show how realistic
our results are and illustrate well the obtained analytical results.
We begin by analyzing system \eqref{state1}, without controls,
then  our model \eqref{state2} subject to the optimal controls,
as characterized by Pontryagin Minimum Principle (PMP). Our numerical simulations
are attained with a set of parameter values as given in \mref{table1.1}.
\begin{table}[H]
\begin{center}
\caption{Parameter values used in our numerical simulations.}
\setlength{\tabcolsep}{4mm} 
{
\begin{tabular}{clll} \hline
Parameters  & Description  & Value &Source\\ \hline
$r$ & Growth rate of crop biomass & 0.1 per day &\cite{JTB} \\
$K$ & Maximum density of crop biomass & 1 $m^{-2}$ &\cite{IJB}\\
$\lambda$ & Aware people activity rate& 0.025 per day&\cite{JTB}\\
$d$ & Natural mortality of pest& 0.01 day$^{-1}$&\cite{EC2}\\
$m_1$ & Conversion efficacy of susceptible pests & 0.8 &\cite{JTB}\\
$m_2$& Conversion efficiency of infected pest& 0.6& \cite{JTB}\\
$\delta$ & Disease related mortality rate & 0.1 per day &\cite{IJB}\\
$a$ & Half saturation constant &  0.5 & \cite{JK}\\
$\alpha$ & Attack rate of pest & 0.025 pest$^{-1}$per day&\cite{JTB}\\
$\sigma$ & Aware people growth rate& 0.015 per day& Assumed\\
$\eta$ & Fading of memory of aware people& 0.015 {day$^{-1}$}& \cite{JTB} \\
$\gamma$ & Rate of awareness from global source & 0.003 day$^{-1}$ &\cite{IJB}\\
$c$ & Half saturation constant& 1&\cite{SPA}\\
\hline
\end{tabular}}
\label{table1.1}
\end{center}
\end{table}
We examine the impact of optimal control strategies by pertaining
a Runge--Kutta fourth-order scheme on the optimality system. The optimality system
is obtained by taking the state system together with the adjoint system, the optimal controls,
and the transversality conditions. The vibrant behavior of the model, in relation to both
controls, is also deliberated. The optimal policy is achieved by finding a solution 
to the state system \eqref{state2} and costate system \eqref{eq:adjSyst}. 
An iterative design is explored and used to decide the solution 
for the optimality system. The numerical method we utilized
is the forward-backward sweep method, which includes the iterative Runge--Kutta fourth-order
progressive-regressive schemes. The progressive scheme is used in obtaining the solutions
of the state ODEs given in \eqref{state2} with given/fixed initial conditions, while
the regressive scheme is applied in obtaining the solutions of the adjoint system given by
\eqref{eq:adjSyst} with the transversality conditions \eqref{transv:cond}.
Following this, we use a convex grouping of the previous iteration approximate controls
and the ones from the characterization values, to update the controls. This procedure
continues and the iterative values are repeated if the values of the unknowns at the preceding
iteration are not considerably near to the ones at the present iteration. We proceed with 
the necessary iterations till convergence is attained. It is a two point boundary-value problem,
with divided boundary conditions at $t_0 = 0$ and $t = t_f$. It elucidates our selection
of the Runge--Kutta fourth order method. The optimality system 
has unique solution for some small time $t_f $ and we use 
$t_f=100$ days for our system with the application of optimal control. 
The time $t_f=2000$ days is used in the system without controls, 
to see the stability switches of the dynamical system 
without applying the optimal controls.
Besides, on the numerical simulations the values of the weight function are taken as
$A_1 =1015$, $A_2=1010$, $B_1=1.6$, and $B_2=1$, and the initial state variables as
$X(0)=0.2$, $S(0)=0.07$, $I(0)=0.05$, $A(0)=0.5$.
In \mref{fig1}, the time series solution of model system \eqref{state1} is sketched.

\begin{figure}[H]
\centering
\includegraphics[scale=0.86]{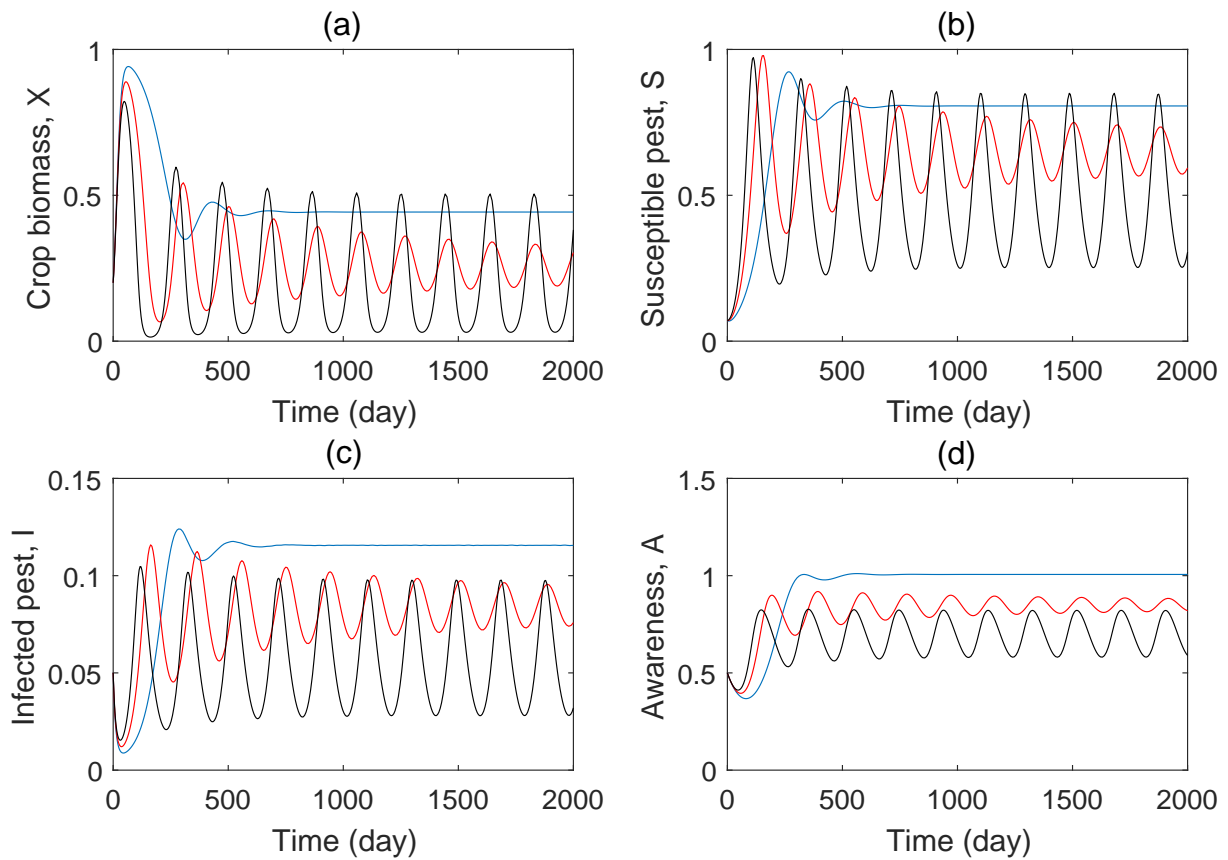}
\caption{Numerical solution of system \eqref{state1} (without controls)
for different values of the rate $\alpha$ of pest:
$\alpha=0.06$ (blue line), $\alpha=0.08$ (red line),
$\alpha=0.06$ (black line).
Other parameter values as in \mref{table1.1}.}
\label{fig1}
\end{figure}

It is observed that our model variables $X(t)$, $S(t)$, $I(t)$ and $A(t)$ become oscillating
as the worth of utilization rate (i.e., $\alpha$) enlarges. Also, steady state value of both
pest population (when exists) are increased as $\alpha$ rises. An oscillating solution is seen
for bigger values of $\alpha$ ($\alpha=0.1$). A bifurcation illustration is shown in 
\mref{fig2}, taking $\alpha$ as the main parameter. 

\begin{figure}[H]
\centering
\includegraphics[scale=0.67]{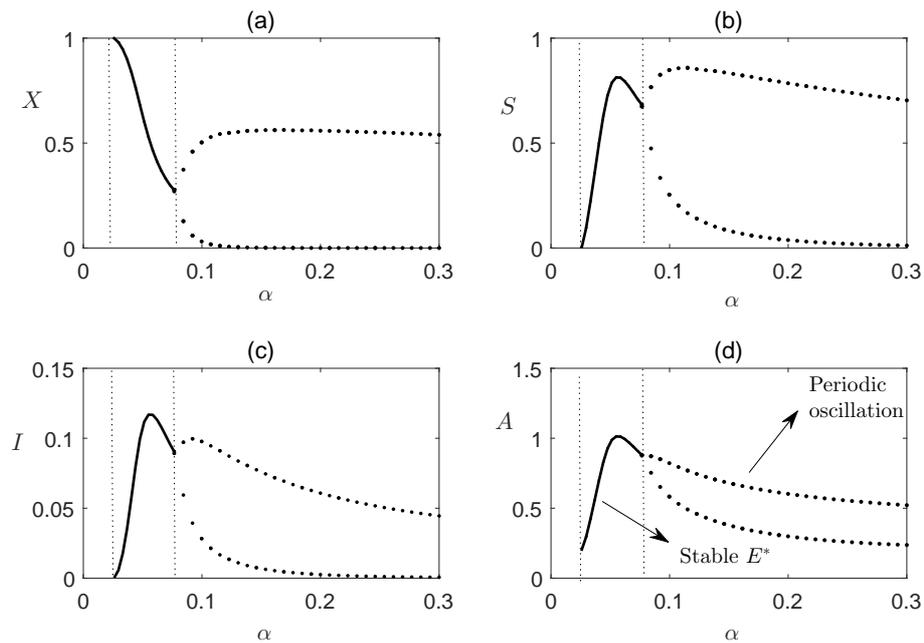}
\caption{Bifurcation diagram of the coexisting equilibrium $E^*$ (when exists) of
system \eqref{state1} (without controls) with respect to the rate $\alpha$ of pest.
Other parameter values as in \mref{table1.1}.}
\label{fig2}
\end{figure}

Decisive values depend on many parameters, such as the employment 
rate of the awareness program (see \mref{fig3}).

\begin{figure}[H]
\centering
\includegraphics[scale=0.67]{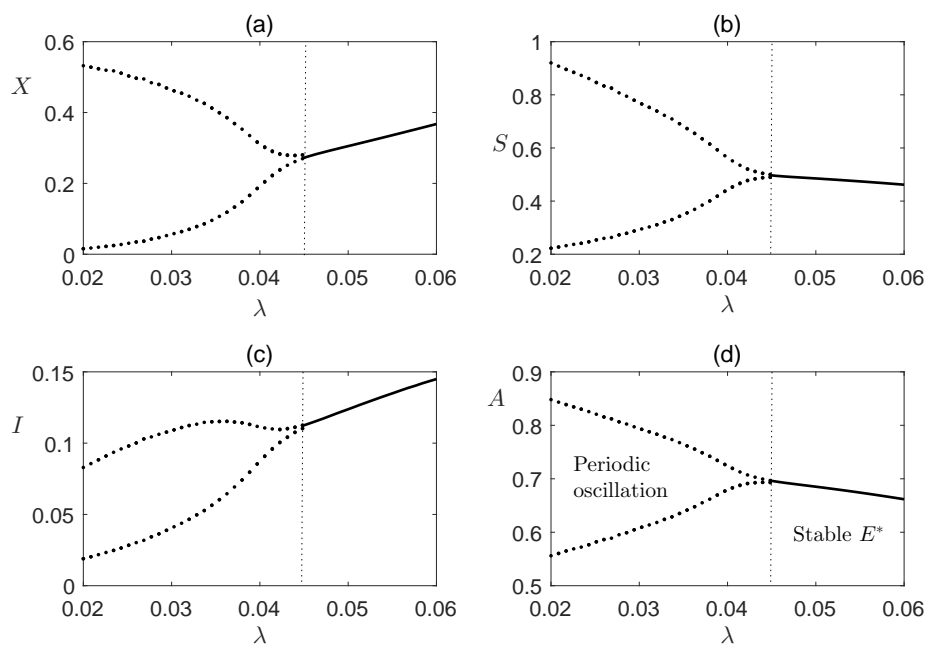}
\caption{Bifurcation diagram of the coexisting equilibrium $E^*$ (when exists)
of system \eqref{state1} with respect to the aware people
activity rate $\lambda$. Other parameters as in \mref{table1.1}.}
\label{fig3}
\end{figure}

The numerical solutions of the control system \eqref{state2}
are given in Figures~\ref{fig4}--\ref{fig6},
showing the impact of optimal control theory.

\begin{figure}[H]
\centering
\includegraphics[scale=0.63]{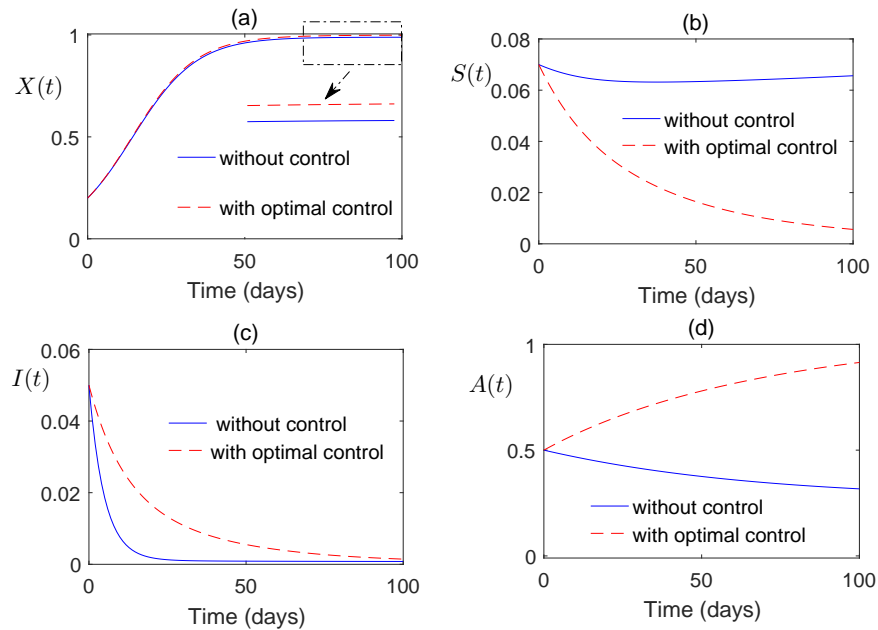}
\caption{Numerical solutions of the controlled system model \eqref{state2}
with $u_1\equiv0$: solution without control (i.e., for $u_2\equiv 0$,
coinciding with the solution of \eqref{state1}), in blue color, versus optimal
solution (i.e., for $u_2\ne 0$ chosen according with the PMP), in red color.}
\label{fig4}
\end{figure}

\begin{figure}[H]
\centering
\includegraphics[scale=0.63]{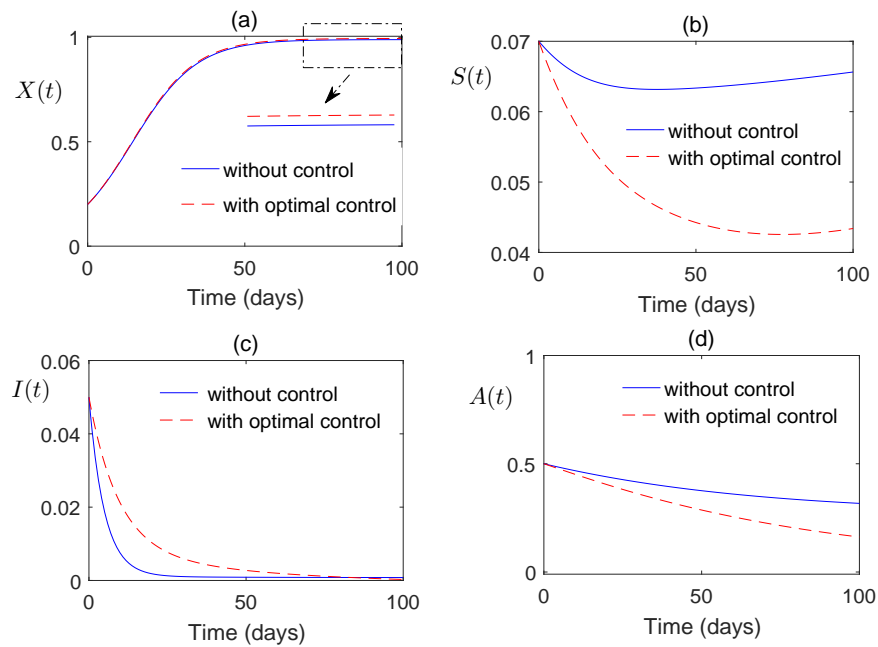}
\caption{Numerical solutions of the controlled system model \eqref{state2}
with $u_2\equiv0$: solution without control (i.e., for $u_1\equiv 0$,
coinciding with the solution of \eqref{state1}), in blue color, versus optimal
solution (i.e., for $u_1\ne 0$ chosen according with the PMP), in red color.}
\label{fig5}
\end{figure}

In \mref{fig6}, we contrast the dynamics with no control and with
optimal control, according with \mref{sec4}.
We operate our controls for the first $100$ days.
Wavering does not happen with the optimal control strategy.
Plant biomass enlarged and the population of pest reduced drastically
with a sway of the finest outlines of universal alertness (i.e., $u_2^*\gamma$)
and consciousness based control movement, $u_1^*\lambda$, as exposed in
\mref{fig6}. 

\begin{figure}[H]
\centering
\includegraphics[scale=0.90]{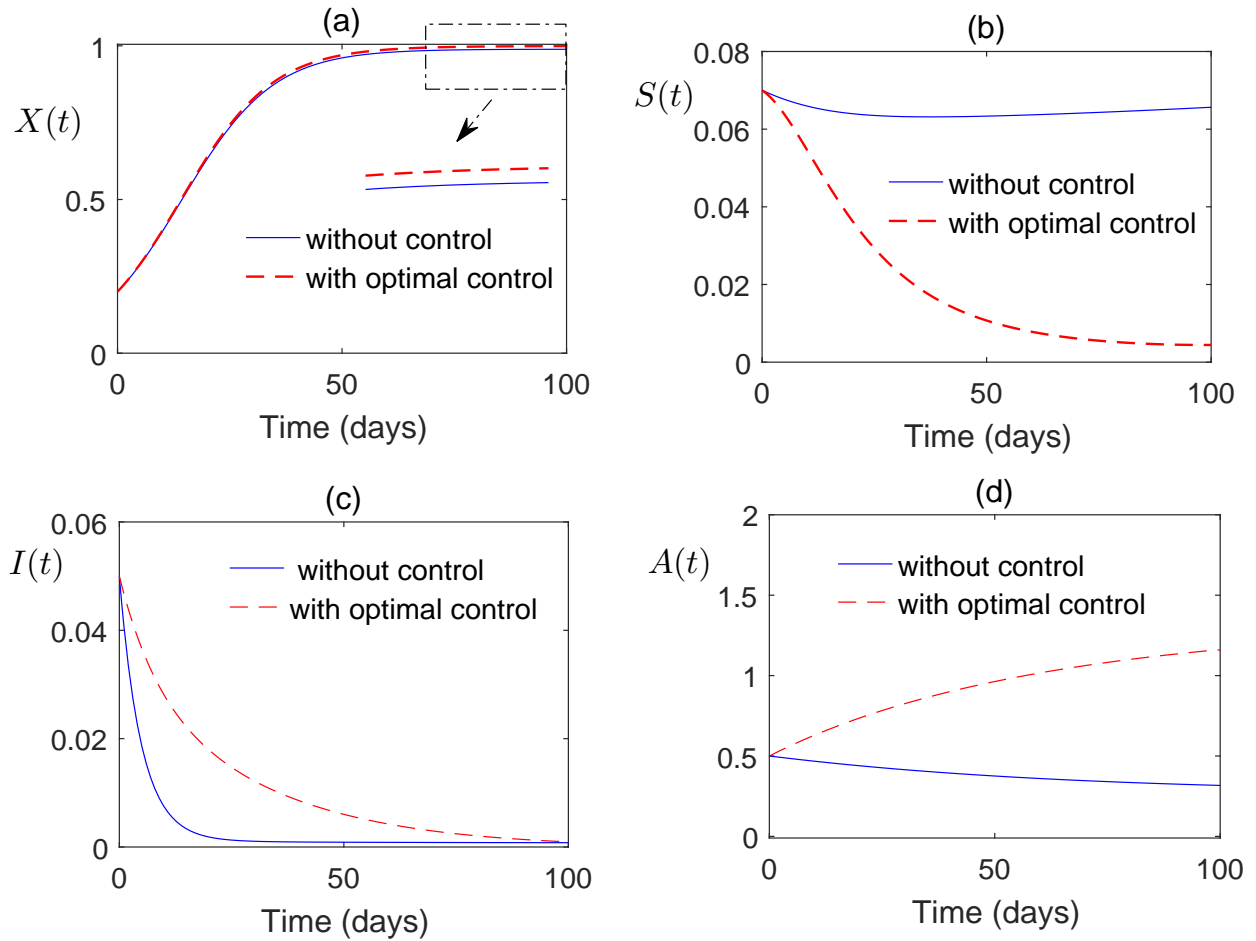}
\caption{Numerical solution of the uncontrolled system \eqref{state1},
in blue color, versus optimal solution (i.e., for both $u_1 = u_1^*$
and $u_2 = u_2^*$ chosen according with \mref{ourThm:fromPMP}), in red color.
The controls $u_1^*$ and $u_2^*$ are shown in \mref{fig7}.}
\label{fig6}
\end{figure}

It is also seen that the susceptible pest population
goes to destruction inside the earliest $80$ days, owing to endeavor
of the extremal controls, which are depicted in \mref{fig7}.
The infected pest population also enhances up to a convinced period and hence declining.
As a result, the outcome of optimal control by means of consciousness based bio-control
has a large input in scheming the pest problem in the plant meadow.

\begin{figure}[H]
\centering
\includegraphics[scale=0.60]{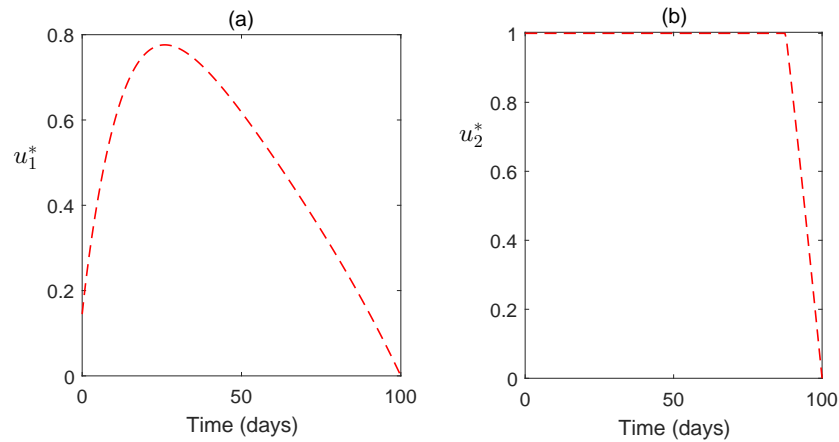}
\caption{Pontryagin extremal controls $u_1^*$ \eqref{u1} and $u_2^*$ \eqref{u2}
plotted as functions of time. The corresponding state trajectories
$X^*$, $S^*$, $I^*$, and $A^*$ are shown, in red color, in \mref{fig6}.}
\label{fig7}
\end{figure}


\section{Discussion and conclusion}
\label{sec6}

In this article, a mathematical model described by a system
of ordinary differential equations has been projected and
analyzed to deal the consequences of attentiveness plans
on the control of pests in farming perform. Our model contains
four concentrations, specifically, concentration of plant-biomass,
density of susceptible pests, infected pest, and population awareness.
We presume that conscious people adopt bio-control measures,
like the included pest administration, since it is environmental and less
harmful to human health. With this loom, the susceptible pest is made infected.
Again, we presume infected pests cannot damage the crop.
We have shown that the proposed model exhibits four steady state points:
(i) the axial fixed point, which is unstable at all time, (ii) the pestles
fixed point, which is stable when the threshold value $R_0$ is less than one
and unstable for $R_0 $ bigger than one, (iii) susceptible pest
free fixed point, which may exist when the carrying capacity $K$ is greater 
than the crop biomass $X$, and (iv) the coexistence fixed point. 
From our analytical and numerical investigations, we saw that the most 
ecological noteworthy parameters in the system are the consumption
rate $\alpha$ and the awareness activity rate $\lambda$. If the collision 
of awareness campaigns increases, the concentration of crop increases as well 
and, as an outcome, pest prevalence declines.

We assumed that responsive groups take on bio-control, such as the
included pest managing, as it is eco-friendly and is fewer injurious
to individual health and surroundings. Neighboring awareness movements
may be full as comparative to the concentration of susceptible pest available
in the plant pasture. We supposed that the international issues, disseminated
by radio, TV, telephone, internet, etc., enlarge the stage of consciousness.
Our study illustrates that if the brunt of alertness movements enlarges,
pest occurrence decreases. Consequently, the density of plant rises.
The system alters its steadiness asymptotically to a cyclic oscillation.
However, global consciousness through radio, TV, and social media
can control the oscillations and makes the system steady.

Moreover, we have used optimal control theory to provide the price effective
outline of bio-pesticides and an universal alertness movement. This loom condensed
the price of administration as well as enlarged the yield. In short, elevated
responsiveness amongst people with optimal outline of bio-pesticides can be a correct
feature for the control of pest in plant field, dropping the solemn matters that
synthetic pesticides have on individual fitness and atmosphere.


\section*{Acknowledgments}

Abraha acknowledges Adama Science and Technology University
for its welcome and bear during this work through the research grant ASTU/SP-R/027/19.
Torres is grateful to the financial support from the Portuguese Foundation
for Science and Technology (FCT), through the R\&D Center CIDMA and project UIDB/04106/2020.
The authors would like to thank three anonymous Reviewers 
for their detailed and thought-out suggestions.


\section*{Conflict of interest}

The authors declare that they have no conflict of interest.



\end{document}